\newtheorem{theorem}{Theorem}[section]
\newtheorem{lemma}[theorem]{Lemma}
\newtheorem{cor}[theorem]{Corollary}
\newtheorem{prop}[theorem]{Proposition}
\theoremstyle{definition}
\newtheorem{defn}[theorem]{Definition}
\theoremstyle{remark}
\theoremstyle{definition}
\newtheorem{definition}[theorem]{Definition}
\newcommand{\G }{\Gamma (G, X\cup \mathcal H)}
\newcommand{\Hl }{\{ H_\lambda , \lambda \in \Lambda \} }
\newfont{\eufm}{eufm10}
\renewcommand{\L}{{\mathcal{L}}}
\newcommand{\Th}{{\mathrm{Th}}}
\newcommand{\ucl}{{\mathbf{Ucl}}}
\newcounter{comcount}
\title{Limits of relatively hyperbolic groups and Lyndon's completions }
\author{Olga Kharlampovich and Alexei Myasnikov \footnote{both authors supported by NSERC grant}\\
McGill University}
\date{March 17, 2010}
\begin{document}
\maketitle

\begin{abstract}
In this paper we describe finitely generated groups $H$ universally
equivalent (with constants from $G$ in the language) to a given
torsion-free relatively hyperbolic group $G$ with free abelian
parabolics. It turns out that, as in the free group case, the group
$H$ embeds into the Lyndon's completion $G^{\mathbb{Z}[t]}$ of the
group $G$, or, equivalently, $H$ embeds into a group obtained from
$G$ by finitely many extensions of centralizers. Conversely, every
subgroup of $G^{\mathbb{Z}[t]}$ containing $G$ is universally
equivalent to $G$. Since finitely generated groups
universally equivalent to $G$  are precisely the finitely generated
groups discriminated by $G$  the result above gives a description of
finitely generated groups discriminated by $G$. Moreover, these groups are exactly the coordinate groups of irreducible algebraic sets over $G$.

\end{abstract}

\tableofcontents

\section{Introduction}
 Denote by $\mathcal G$ the class of all non-abelian torsion-free
relatively hyperbolic groups with free abelian parabolics. In this
paper we describe finitely generated groups that have the same
universal theory as a given group $G\in\mathcal G$ (with constants
from $G$ in the language). We say that they are universally
equivalent  to $G$. These groups are central to the study of logic
and algebraic geometry of $G$. They are coordinate groups  of
irreducible algebraic varieties over $G$. It turns out that, as in
the case when $G$ is a non-abelian free group \cite{KMIrc}, a
finitely generated group $H$ universally equivalent to $G$ embeds
into the Lyndon's completion $G^{\mathbb{Z}[t]}$ of the group $G$,
or, equivalently, $H$ embeds into a group obtained from $G$ by
finitely many extensions of centralizers. Conversely, every subgroup
of $G^{\mathbb{Z}[t]}$ containing $G$ is universally equivalent to
$G$ \cite{BMR2}. Let $H$ and $K$ be $G$-groups (contain $G$ as a
subgroup). We say that a family of $G$-homomorphisms (homomorphisms
identical on $G$) ${\mathcal F} \subset Hom_G(H,K)$ {\it separates}
[{\it discriminates}] $H$ into $K$ if for every non-trivial element
$h \in H$ [every finite set of non-trivial elements $H_0 \subset H$]
there exists $\phi \in {\mathcal F}$ such that $h^\phi \neq 1$
[$h^\phi \neq 1$ for every $h \in H_0$]. In this case we say that
$H$ is $G$-{\it separated} [$G$-{\it discriminated}] by $K$.
Sometimes we do not mention $G$ and simply say that $H$ is separated
[discriminated] by $K$. In the event when $K$ is a free group we say
that $H$ is  {\it freely separated} [{\it freely discriminated}].
 Since finitely generated groups
universally equivalent to $G$ are precisely the finitely generated
groups discriminated by $G$ (\cite{BMR1}, \cite{MR2}), the result
above gives a description of finitely generated groups discriminated
by $G$ or fully residually $G$ groups. These groups are exactly the coordinate groups of irreducible algebraic sets over $G$. Therefore we obtain a complete description of irreducible algebraic sets over $G$. Our proof uses
the results of \cite{Groves2} and \cite{RHG}, \cite{ESBG}.

\subsection{Algebraic sets}

Let $G$ be  a group generated by $A$,
$F(X)$ - free group on  $X = \{x_1, x_2, \ldots x_n\}$.
 A { system  of equations} $S(X,A) = 1$  in variables $X$ and
coefficients from $G$ can be viewed as a subset of $G \ast F(X)$.
A solution of $ S(X,A) = 1$ in  $G$ is a tuple $(g_1,
\ldots, g_n) \in G^n$ such that $S(g_1, \ldots, g_n) =1$ in $G$.
$V_G(S)$, the set of all solutions of  $ S = 1$ in $G$,
is called  an {algebraic set } defined by $S$.

 The maximal subset $R(S) \subseteq G \ast F(X)$ with
 $$V_G(R(S)) = V_G(S)$$ is the { radical }  of $S = 1$ in $G$.
The quotient group
$$G_{R(S)}=G[X]/R(S)$$
is the { coordinate group} of $S= 1$.

The following conditions are equivalent
 \begin{itemize}
  \item $G$ is  {
equationally Noetherian}, i.e.,  every system $S(X) = 1$
 over $G$ is equivalent to some finite part of itself.

  \item the {Zariski topology} (formed by  algebraic sets as a sub-basis of closed sets) over $G^n$ is {\bf Noetherian} for every $n$,
i.e., every proper descending chain of closed sets in $G^n$ is
finite.

 \item Every { chain of proper epimorphisms} of coordinate groups over
 $G$ is {finite}.
 \end{itemize}

If the Zariski topology is Noetherian, then every algebraic set can
be uniquely presented as a  finite union of its {\bf irreducible
components}: $$ V = V_1 \cup \ldots V_k$$

Recall, that a closed subset $V$ is { irreducible} if it is not a
union of two proper closed (in the induced topology) subsets.

\subsection{Fully residually $G$ groups}

A direct limit of a direct system of finite partial $n$-generated
subgroups of $G$ such that all products of generators and their
inverses eventually appear in these partial subgroups, is called a
{\bf limit group over $G$.} The same definition can be given using
the notion of ``marked group''.

 A {\em marked} group $(G,S)$ is a group $G$ with a
prescribed family of generators $S = (s_1,\ldots,s_n)$. Two marked
groups $(G, (s_1,\ldots,s_n))$ and $(G', (s'_1,\ldots,s'_n))$ are
isomorphic as marked groups if the bijection $s_i
\longleftrightarrow s'_i$ extends to an isomorphism. For example,
$(\langle a \rangle,(1,a))$ and $(\langle a \rangle,(a,1))$ are not
isomorphic as marked groups. Denote by ${\cal G}_n$ the set of
groups marked by $n$ elements up to isomorphism of marked groups.
One can define a metric on ${\cal G}_n$ by setting the distance
between two marked groups $(G, S)$ and $(G',S')$ to be $e^{-N}$ if
they have exactly the same relations of length at most $N$.
(This metric was used in \cite{Gromov}, \cite{Grig}, \cite{CG}.)
Finally, a limit group over $G$ is a limit (with respect to the
metric above) of marked groups $(H_i,S_i),$ where $H_i\leq G$, $i\in {\mathbb N},$ in
${\cal G}_n$.

The following two theorems summarize properties that are equivalent
for a group $H$ to the property of being discriminated by $G$ (being
$G$-discriminated by $G$).

\medskip
\noindent {\bf Theorem A} {\it [No coefficients] Let $G$ be an
equationally Noetherian group.
Then for a finitely generated group $H$ the following
conditions are equivalent:
\begin{enumerate}

\item $\Th_{\forall} (G) \subseteq \Th_{\forall} (H)$, i.e., $H \in \ucl(G)$;
\item $\Th_{\exists} (G) \supseteq
    \Th_{\exists} (H)$;
\item $H$ embeds into an ultrapower of $G$;
\item $H$ is discriminated by $G$;
\item $H$ is a limit group over $G$;
\item $H$ is  defined by a complete atomic type in the theory $\Th _{\forall} (G)$;
\item $H$ is the coordinate group of an irreducible
algebraic set over $G$ defined by a system of coefficient-free
equations.
\end{enumerate}
}

For a group $A$ we denote by $\L_A$ the language of groups with constants from $A$.

\medskip
\noindent {\bf Theorem B} {\it [With coefficients] Let $A$ be a group  and $G$ an
$A$-equationally Noetherian $A$-group. Then for a finitely
generated $A$-group $H$ the following conditions are
equivalent:
\begin{enumerate}

\item $\Th_{\forall,A} (G)
    = \Th _{\forall,A} (H)$;
\item $\Th_{\exists,A} (G) = \Th _{\exists,A} (H)$;
\item $H$ $A$-embeds into an  ultrapower of $G$;
\item $H$ is $A$-discriminated by $G$;
\item $H$ is a limit  group    over  $G$;
\item $H$ is a group  defined by a complete atomic type in
    the theory $\Th_{\forall,A} (G)$ in the language $\L_{A}$;
\item $H$ is the  coordinate group  of an irreducible algebraic
set over $G$ defined by  a system of equations with
coefficients in $A$.
\end{enumerate}
}

Equivalences $1\Leftrightarrow 2\Leftrightarrow 3$ are standard
results in mathematical logic. We refer the reader to \cite{R} for
the proof of $2\Leftrightarrow 4$, to \cite{KM9}, \cite{BMR1} for
the proof of $4\Leftrightarrow 7$. Obviously, $2\Rightarrow
5\Rightarrow 3$. The above two theorems are proved in \cite{DMR} for
arbitrary equationally Noetherian algebras. Notice, that in the case
when $G$ is a free group and $H$ is finitely generated, $H$ is a
limit group if and only if it is a limit group in the terminology of
\cite{S1}, \cite{CG} or \cite{Groves1}, \cite{Groves2}.

\subsection{Lyndon's completions of CSA groups}

The paper \cite{MR2}, following Lyndon \cite{Ly2}, introduced a
$\mathbb{Z}[t]$-completion $G^{\mathbb{Z}[t]}$ of a given CSA-group
$G$. In \cite{BMR2} it was shown that if $G$ is a CSA-group
satisfying the Big Powers condition, then finitely
generated subgroups of $G^{\mathbb{Z}[t]}$ are $G$-universally
equivalent to $G$.

We refer to finitely generated $G$-subgroups of $G^{\mathbb{Z}[t]}$
as {\em exponential extensions} of $G$ (they are obtained from $G$
by iteratively adding  $\mathbb{Z}[t]$-powers of  group elements).
The group $G^{\mathbb{Z}[t]}$ is a union of an
ascending chain of extensions of centralizers of the group $G$ (see
\cite{MR2}).

A group obtained as a union of a chain of extensions of
centralizers
 $$\Gamma = \Gamma_0 <  \Gamma _1 < \ldots <  \ldots \cup \Gamma_k $$
  where
   $$\Gamma_{i+1} = \langle \Gamma_i, t_i \mid [C_{\Gamma_i}(u_i),t_i] =  1\rangle$$
(extension of the centralizer $C_{\Gamma_i}(u_i)$) is called an
iterated extension of centralizers and is denoted $\Gamma (U,T),$
where $U=\{u_1,\ldots ,u_k\}$ and $T=\{t_1,\ldots ,t_k\}.$

Every exponential extension $H$ of $G$ is also a subgroup of an
iterated extension of centralizers of $G$.

\subsection{Relatively hyperbolic groups}

A group $G$ is hyperbolic relative to a collection of subgroups $\{H_{\lambda}\}_{\lambda\in\Lambda }$
(parabolic subgroups) if $G$ is finitely presented relative to $\{H_{\lambda}\}_{\lambda\in\Lambda }$
$$G=\langle X\cup (\mathcal H=\bigsqcup _{\lambda\in\Lambda}H_{\lambda})| \mathcal R\rangle ,$$
 and there is a constant $L>0$ such that for any word $W\in (X\cup {\mathcal H})$ representing the identity in $G$ we have ${\rm Area} ^{rel} (W)\leq L||W||,$
where ${\rm Area} ^{rel} (W)$ is the minimal number $k$ such that
$W=\prod _{i=1}^kg_iR_ig_i^{-1},\ r_i\in\mathcal R$, in the free product of the free group with basis $X$ and groups
$\{H_{\lambda}\}_{\lambda\in\Lambda }$.

 In \cite{Groves2}
(Theorem 5.16) Groves showed that groups from $\mathcal{G}$ are
equationally Noetherian.
  By Theorem 1.14 of \cite{RHG}
 the centralizer of every hyperbolic element from a group $G\in\mathcal G$ is cyclic.
 Therefore any non-cyclic abelian subgroup is contained in a finitely generated
 parabolic subgroup. It follows that finitely generated groups from  $\mathcal G$
 are CSA, that is have malnormal maximal abelian subgroups.
(see also Lemma 6.7, \cite{Groves1}).
\subsection{Big Powers condition}
We say that an element $g\in G$ is {\it hyperbolic} if it is not
conjugate to an element of one of the subgroups $H_\lambda $,
$\lambda \in \Lambda $.

\begin{prop} \label{bigpowers} Groups from $\mathcal G$ satisfy the big powers condition for hyperbolic elements: if
$U$ is a set of
 hyperbolic elements,
$g=g_1u_1^{n_1}g_2\ldots u_k^{n_k}g_{k+1},$ $u_1,\ldots , u_k \in
U$,  and $g_{i+1}^{-1}u_ig_{i+1}$ do not commute with $u_{i+1}$, then
there exists a positive number $N$ such that for $|n_i|\geq N, \
i=1,\ldots ,k$,\  $g\neq 1$.\end{prop}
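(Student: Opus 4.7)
The plan is to establish the Big Powers property by running the standard ``piecewise quasi-geodesic'' argument inside the relative Cayley graph $\Gamma(G, X\cup\mathcal H)$. The three ingredients I will combine are (a) the loxodromic behavior of hyperbolic elements in a relatively hyperbolic group, (b) the CSA property (already noted in the preceding paragraph) which lets me translate ``non-commutation'' into ``distinct axes'', and (c) the local-to-global property for quasi-geodesics available in $\Gamma(G, X\cup\mathcal H)$ (Osin).

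First, for each hyperbolic $u\in U$ I would recall that $u$ acts loxodromically on $\Gamma(G, X\cup\mathcal H)$ with a quasi-geodesic axis, that $C_G(u)$ is infinite cyclic (Theorem 1.14 of \cite{RHG} together with torsion-freeness), and that the relative translation length of $u^{n}$ grows linearly in $|n|$. Next, by the CSA property, two hyperbolic elements commute if and only if they share the same maximal cyclic subgroup, equivalently the same axis. So the hypothesis that $g_{i+1}^{-1}u_ig_{i+1}$ does not commute with $u_{i+1}$ is exactly the statement that the axes of the hyperbolic elements $u_i$ and $g_{i+1}u_{i+1}g_{i+1}^{-1}$ are distinct.

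I would then read the word $g = g_1 u_1^{n_1}\cdots u_k^{n_k}g_{k+1}$ as a concatenation of paths in $\Gamma(G, X\cup\mathcal H)$: short ``connector'' segments representing the $g_i$, interleaved with long pieces $u_i^{n_i}$ that track the axis of the conjugate $h_i u_i h_i^{-1}$, where $h_i = g_1 u_1^{n_1}\cdots g_i$ is the cumulative prefix. The non-commutation condition at each joint guarantees that the two consecutive axes meeting at $h_{i+1}$ are different; a standard bounded intersection lemma for distinct quasi-geodesic axes of loxodromic elements in a relatively hyperbolic group then bounds the overlap of consecutive axes by a constant $B$ depending only on $G$ and on the lengths of the $g_i$. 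Choosing $N$ so that the axial piece $u_i^{n_i}$ travels more than, say, $2B$ plus the quasi-geodesic constants, forces the concatenation to be a local $(\lambda,c)$-quasi-geodesic with prescribed parameters.

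Finally, I invoke the local-to-global quasi-geodesic lemma for $\Gamma(G, X\cup\mathcal H)$ (Osin's version for relatively hyperbolic groups): a local quasi-geodesic with sufficiently large local scale is globally a quasi-geodesic, hence its endpoints are distinct. This yields $g\neq 1$. The main obstacle is step three, namely extracting a clean, quantitative ``distinct axes $\Rightarrow$ bounded overlap'' statement in the relatively hyperbolic setting (parabolic subgroups can make axes spend arbitrarily long time in a single coset of an $H_\lambda$, so one must argue in the coned-off graph and use the fact that hyperbolic axes, after coning, have bounded intersection). Once that lemma is in place, the choice of $N$ is straightforward and the local-to-global principle finishes the proof.
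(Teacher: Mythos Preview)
Your outline is a legitimate route to the result, but it is genuinely different from the argument the paper gives. The paper does not stay in the original relative Cayley graph and does not invoke a local-to-global quasi-geodesic principle. Instead it first applies Lemma~\ref{Eg} to enlarge the peripheral structure by the elementary closures $E_G(u_i)$; in the new relative Cayley graph each block $u_i^{n_i}$ becomes a single $E_G(u_i)$-component of length~$1$, and the whole word $g$ becomes a cycle of length $\le 2k+1$. The non-commutation hypothesis (after a reduction to $g_{i+1}\notin E(u_i)$ when $u_i=u_{i+1}$) is used to show that these components are pairwise isolated, and then Lemma~\ref{Omega} bounds $\sum |u_i^{n_i}|_\Omega$ linearly in the cycle length, forcing a contradiction for large $|n_i|$. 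This bypasses entirely the ``bounded overlap of distinct axes'' step that you correctly flag as the main technical obstacle in your approach: by promoting the $u_i$ to peripheral status, axis geometry is replaced by component combinatorics, and the quantitative control comes for free from Lemma~\ref{Omega}. Your method is more geometric and would work in any setting where one has loxodromics on a hyperbolic graph, but it requires carefully stating and proving the overlap bound (your parenthetical worry about long parabolic pieces is actually moot in $\Gamma(G,X\cup\mathcal H)$, where each $H_\lambda$-coset has diameter~$1$; the real work is bounding the fellow-travel of two distinct hyperbolic axes, which depends on the specific elements and not just on $G$). The paper's approach trades that analytic lemma for a purely combinatorial one already available from Osin's theory.
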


The proof of this proposition is similar to that of \cite[Lemma 4.4]{ESBG} and was
suggested by D. Osin.

The Cayley
graph of $G$ with respect to the generating set $X\cup \mathcal H$
is denoted by $\G$. For a path $p$ in $\G $, $l(p)$ denotes its
length, $p_-$ and $p_+$ denote the origin and the terminus of $p$,
respectively.

\begin{defn}[\cite{RHG}]
Let $q$ be a path in the Cayley graph $\G $. A (non--trivial)
subpath $p$ of $q$ is called an {\it $H_\lambda $--component} for
some $\lambda \in \Lambda $ (or simply a {\it component}), if
\begin{enumerate}
\item[(a)] The label of $p$ is a word in the alphabet
$H_\lambda\setminus \{ 1\} $;

\item[(b)] $p$ is not contained in a bigger subpath of $q$
satisfying (a).
\end{enumerate}
Two $H_\lambda $--components $p_1, p_2$ of a path $q$ in $\G $ are
called {\it connected} if there exists a path $c$ in $\G $ that
connects some vertex of $p_1$ to some vertex of $p_2$ and the label
of the path, denoted ${\phi (c)}$, is a word consisting of letters
from $ H_\lambda\setminus\{ 1\} $. In algebraic terms this means
that all vertices of $p_1$ and $p_2$ belong to the same coset
$gH_\lambda $ for a certain $g\in G$. Note that we can always assume
that $c$ has length at most $1$, as every nontrivial element of
$H_\lambda \setminus\{ 1\} $ is included in the set of generators.
An $H_\lambda $--component $p$ of a path $q$ is called {\it isolated
} (in $q$) if no distinct $H_\lambda $--component of $q$ is
connected to $p$.
\end{defn}

The following lemma can be found in \cite[Lemma 2.7]{Osi}.

\begin{lemma}\label{Omega}
Suppose that $G$ is a group hyperbolic relative to a collection of
subgroups $\Hl $. Then there exists a constant $K>0$ and finite
subset $\Omega \subseteq G$ such that the
following condition holds. Let $q$ be a cycle in $\G $, $p_1,
\ldots , p_k$ a set of isolated components of $q$
for some $\lambda\in \Lambda $, $g_1, \ldots , g_k$ the elements
of $G$ represented by the labels of $p_1, \ldots , p_k$,
respectively. Then for any $i=1, \ldots , k$, $g_i$ belongs to the
subgroup $\langle \Omega \rangle \le G$ and the word
length of $g_i$ with respect to $\Omega $ satisfies the
inequality
$$ \sum\limits_{i=1}^k |g_i|_{\Omega }\le Kl(q).$$
\end{lemma}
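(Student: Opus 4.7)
The plan is to exploit the linear relative isoperimetric inequality supplied by relative hyperbolicity. The label of $q$ is a word $W$ of length $l(q)$ in $X\cup\mathcal H$ representing $1$ in $G$, so ${\rm Area}^{rel}(W)\le L\cdot l(q)$, and there is a relative van Kampen diagram $\Delta$ for $W$ over $\langle X\cup\mathcal H\mid\mathcal R\rangle$ with boundary $q$ and at most $L\cdot l(q)$ two-cells, each labeled by a relator from the finite set $\mathcal R$. I would take $\Omega\subseteq G$ to be the set of all elements of $\bigsqcup_{\mu\in\Lambda}H_\mu$ appearing as letters in some word of $\mathcal R$; finiteness of $\Omega$ follows from finiteness of $\mathcal R$ and of each relator.

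Fix an isolated component $p_i$ with label $g_i\in H_\lambda$. The key step is to construct a subdiagram $\Delta_i\subseteq\Delta$ attached to $p_i$ by an absorption procedure: start with the edge $p_i$ and repeatedly absorb any two-cell of $\Delta$ that shares an $H_\lambda\setminus\{1\}$-labeled edge with the current boundary of $\Delta_i$ (other than $p_i$ itself). When the process terminates, $\partial\Delta_i$ decomposes as $p_i$ followed by a path $\sigma_i$ whose edges are sides of the absorbed relators; reading labels around $\partial\Delta_i$ expresses $g_i$ as a product of length $l(\sigma_i)$ in letters from $\Omega$, yielding $g_i\in\langle\Omega\rangle$ and $|g_i|_\Omega\le l(\sigma_i)$. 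The isolation hypothesis enters at two points: the absorption cannot reach another boundary edge of $q$, since otherwise $p_i$ would be joined to a distinct $H_\lambda$-component of $q$ through a path in $\Delta_i$ with $H_\lambda$-letters, contradicting isolation; and distinct $\Delta_i$, $\Delta_j$ must use disjoint sets of two-cells, since a shared cell would analogously connect $p_i$ to $p_j$. Consequently, $\sum_i l(\sigma_i)$ is bounded by $M$ times the total number of two-cells in $\Delta$, where $M$ bounds the perimeter of the finitely many relators in $\mathcal R$; this gives $\sum_i|g_i|_\Omega\le ML\cdot l(q)$, so the constant $K=ML$ works.

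The main obstacle is making the absorption procedure combinatorially rigorous: one must define $\Delta_i$ as a genuine subcomplex, verify that the process terminates, and establish the two disjointness/non-absorption properties above without inadvertently relying on hidden vertex identifications in $\Delta$. In practice this is cleanest after first reducing $\Delta$ (cancelling adjacent mirror-image two-cells sharing an edge) and then arguing by induction on the number of $H_\lambda$-edges in $\Delta$, which is the approach in Osin's original treatment \cite{Osi} from which this lemma is quoted. A secondary subtlety is the case where an edge of $\sigma_i$ is labeled by an element of $H_\mu$ for $\mu\ne\lambda$ or by a generator in $X$; here there is no difficulty because such letters either come from $\Omega$ directly (if in some $H_\mu$) or may be rewritten via the $\mathcal R$-relators without affecting the $\Omega$-length bound.
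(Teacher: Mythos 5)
The paper itself offers no proof of this statement; it is quoted directly as \cite[Lemma 2.7]{Osi}, so there is no in-paper argument to compare against. Your sketch does have the right overall shape (van Kampen diagram from the linear relative isoperimetric inequality, disjoint subdiagrams attached to isolated components, a perimeter count), but two steps would fail as written.

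First, a van Kampen diagram over the relative presentation contains, besides the at most $Ll(q)$ many $\mathcal R$-cells counted by ${\rm Area}^{rel}$, an a priori uncontrolled number of $\mathcal S$-cells whose boundary labels are arbitrary relations in the various $H_\mu$ and whose perimeters are unbounded. Your claim that every two-cell of $\Delta$ is labeled by a word from $\mathcal R$, and hence your bound $\sum_i l(\sigma_i)\le ML\cdot l(q)$, are therefore not correct as stated. Second, and more seriously, the absorption you describe (swallowing any two-cell that shares an $H_\lambda$-edge with the current boundary) produces a path $\sigma_i$ whose edges carry $X$-letters and $H_\mu$-letters ($\mu\neq\lambda$) as well as $\Omega$-letters. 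The identity $g_i=\phi(\sigma_i)^{-1}$ in $G$ then does not yield $g_i\in\langle\Omega\rangle$, and the closing remark that stray letters can ``be rewritten via the $\mathcal R$-relators without affecting the $\Omega$-length bound'' is exactly the missing step: rewriting via $\mathcal R$ neither turns an $X$-letter into an $\Omega$-word nor preserves length. The absorption that actually works, and is Osin's, is dual to yours: one absorbs the $H_\lambda$-edges and $\mathcal S_\lambda$-cells reachable from $p_i$, not the $\mathcal R$-cells. The outer boundary of this $H_\lambda$-subcomplex consists (after $p_i$, and after using isolation to exclude other $H_\lambda$-components of $q$) only of $H_\lambda$-edges lying on $\mathcal R$-cells, which are by definition $\Omega$-letters; and since the whole subcomplex carries only $H_\lambda$-labels, reading around it expresses $g_i$ as a genuine product of $\Omega$-letters, with no $X$-letters entering at all. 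The bound $\sum|g_i|_\Omega\le K\,l(q)$ then follows from the count of $\mathcal R$-cells, with disjointness of the subcomplexes for different $i$ given by isolation exactly as you indicate.
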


Recall also that a subgroup is {\it
elementary} if it contains a cyclic subgroup of finite index. The
lemma below is proved in \cite{ESBG}.

\begin{lemma}\label{Eg}
Let $g$ be a hyperbolic element of infinite order in $G$. Then
\begin{enumerate}
\item The element $g$ is contained in a unique maximal elementary
subgroup $E_G(g)$ of $G$.

\item The group $G$ is hyperbolic relative to the collection
$\Hl\cup \{ E_G(g)\} $.
\end{enumerate}
\end{lemma}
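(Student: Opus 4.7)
I would prove the two parts in turn, exploiting the big powers condition (Proposition~\ref{bigpowers}) and the isolated components lemma (Lemma~\ref{Omega}).

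For part (1), the candidate is the commensurator
\[
E_G(g) := \{h \in G : h\langle g\rangle h^{-1} \cap \langle g\rangle \text{ has finite index on both sides}\},
\]
which, since $g$ has infinite order, coincides with the set of $h$ satisfying $hg^{n}h^{-1}=g^{\pm n}$ for some $n\geq 1$. I would first check that $E_G(g)$ is a subgroup and that the sign of the exponent defines a homomorphism $E_G(g)\to\{\pm 1\}$ with kernel $E_G^+(g)$. The crux is to show $\langle g\rangle$ has finite index in $E_G^+(g)$. Given $h\in E_G^+(g)$ with $hg^{n}h^{-1}=g^{n}$ for large $n$, one forms the null-homotopic cycle in $\G$ with label $hg^{n}h^{-1}g^{-n}$; an analysis of which hyperbolic-letter components of the two $g^{\pm n}$-subpaths are isolated, combined with Proposition~\ref{bigpowers} to rule out trivial cancellations, lets one apply Lemma~\ref{Omega} to bound $|h|_{X\cup\mathcal{H}}$ in terms of $g$ alone. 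Since the bound is independent of $h$, $E_G^+(g)$ admits uniformly bounded coset representatives modulo $\langle g\rangle$, giving finite index. Maximality is then automatic: any elementary $F\ni g$ has $\langle g\rangle$ of finite index in $F$, so every $f\in F$ conjugates a power of $g$ to a power of itself and hence lies in $E_G(g)$; uniqueness follows.

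For part (2), the plan is to invoke Osin's criterion for extending the peripheral structure by an elementary subgroup: $G$ is hyperbolic relative to $\Hl\cup\{E_G(g)\}$ provided $E_G(g)$ is almost malnormal with respect to $\Hl$ and the relative Dehn function remains linear. Almost malnormality has two parts. First, $xE_G(g)x^{-1}\cap E_G(g)$ is finite for $x\notin E_G(g)$, since otherwise both $g$ and $xgx^{-1}$ would be hyperbolic elements of infinite order sharing a common maximal elementary subgroup, contradicting uniqueness from (1). Second, $E_G(g)\cap xH_\lambda x^{-1}$ is finite, because an infinite intersection would conjugate a power of $g$ into $H_\lambda$, contradicting the hyperbolicity of $g$. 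Linearity of the new relative Dehn function is inherited from the original one via the standard van~Kampen surgery that replaces each edge labelled by a non-trivial element of $E_G(g)$ with a word in $X\cup\mathcal{H}$; the resulting area overhead is once more controlled by Lemma~\ref{Omega}.

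The main obstacle is the finite-index claim in part (1): everything else follows formally from the definition of $E_G(g)$ or from Osin's general extension-of-peripherals machinery, but bounding $h$ modulo $\langle g\rangle$ requires precisely the interplay between the big powers condition (preventing cancellation in $g^{\pm n}$) and the isolated components lemma (converting non-cancellation into a length bound on $h$). This is where the relatively hyperbolic structure of $G$ is genuinely essential.
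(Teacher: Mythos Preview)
The paper does not actually prove this lemma: immediately before the statement it says ``The lemma below is proved in \cite{ESBG}'' and gives no argument of its own. So there is nothing to compare your proof against in this text; the authors are simply importing the result from Osin's paper.

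That said, a brief comment on your sketch. Your plan for part~(2) --- verify almost malnormality of $E_G(g)$ relative to $\Hl$ and then invoke Osin's theorem on adjoining a hyperbolically embedded subgroup to the peripheral structure --- is exactly the route taken in \cite{ESBG}, and your two malnormality checks are the right ones.

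For part~(1), your overall shape (define $E_G(g)$ as the commensurator of $\langle g\rangle$ and show $\langle g\rangle$ has finite index) is correct, but the mechanism you describe is not quite right. Lemma~\ref{Omega} bounds the $\Omega$-length of the labels of \emph{isolated $H_\lambda$-components} of a cycle; it says nothing directly about an arbitrary subpath such as the one labelled $h$ in your quadrilateral $hg^{n}h^{-1}g^{-n}$. Since $g$ is hyperbolic, the $g^{\pm n}$-sides are not themselves $H_\lambda$-components, so there is no isolated component whose label \emph{is} $h$ (or a power of $g$) to which the lemma applies. The argument in \cite{ESBG} instead first uses Lemma~\ref{Omega} (or its consequences) to show that a hyperbolic element of infinite order has positive stable translation length in $\G$; since $\G$ is a $\delta$-hyperbolic space, $g$ then acts loxodromically, and the standard fact that the coarse stabiliser of the axis of a loxodromic isometry of a hyperbolic space is virtually cyclic gives the finite-index statement. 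Your invocation of Proposition~\ref{bigpowers} is also misplaced here: big powers is a \emph{consequence} of relative hyperbolicity used elsewhere in the paper, not an input to the proof of Lemma~\ref{Eg}.
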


\begin{proof} Now we can prove Proposition \ref{bigpowers}.

It suffices to prove the proposition under the following additional assumption:
if $u_i$ and $u_j$ are conjugate, then $u_i=u_j$, and if $u_i=u_{i+1}$, then $g_{i+1}\not\in E(u_i)$.
Indeed, if $u_j=h^{-1}u_ih$, we replace $u_j$ by $\bar u_j=u_i=hu_jh^{-1}$, $\ g_j$ by $\bar g_j=g_jh^{-1}$
and $g_{j+1}$ by ${\bar g}_{j+1}=hg_{j+1}$. If $[g_j^{-1}u_{j-1}g_j,u_j]\neq 1,$ then
$h[g_j^{-1}u_{j-1}g_j,u_j]h^{-1}= [\bar g_j^{-1}u_{j-1}\bar g_j,\bar u_j]\neq 1.$ Similarly, if
$[g_{j+1}^{-1}u_{j}g_{j+1},u_{j+1}]\neq 1$, then $[\bar g_{j+1}^{-1}\bar u_{j}\bar g_{j+1},u_{j+1}]\neq 1$.
The CSA condition implies that $[g_{i+1}^{-1}u_{i}g_{i+1},u_i]=1$ is equivalent to  $g_{i+1}\in E(u_i).$

Joining $g_1, \ldots , g_{k+1}$ to the finite relative generating set
$X$ if necessary, we may assume that $g_1, \ldots , g_{k+1}\in X$. Set
$${\mathcal F} =\{ f\in \langle \Omega \rangle ,\;
|f|_{\Omega }\le 4K\} ,$$ where $K$ and $\Omega $ are given by Lemma \ref{Omega}. Suppose that $g_1u_1^{n_1}\ldots
g_ku_k^{n_k}g_{k+1}= 1$. We consider a loop $p=q_1r_{1}q_2r_2\ldots q_kr_{k}q_{k+1}$ in
$\G $, where $q_i$ (respectively, $r_{i}$) is labeled by $g_i$
(respectively by $u_i^{n_i}$).

\begin{figure}
\unitlength=1mm \linethickness{0.4pt}
\begin{picture}(140,23)(-25,4)
\qbezier(65.23,7.95)(36.15,35.89)(6.01,7.95)

\put(5.83,7.95){\circle*{1}} \put(15.94,15.68){\circle*{1}}
\put(55.92,15.23){\circle*{1}} \put(50.28,18.65){\circle*{1}}
\put(22.03,18.95){\circle*{1}} \put(65.29,7.8){\circle*{1}}
\put(37.01,21.9){\vector(1,0){.07}}

\put(13,9){\makebox(0,0)[cc]{$p_m$}}
\put(16.5,19.62){\makebox(0,0)[cc]{$r_i$}}
\put(54.5,19.3){\makebox(0,0)[cc]{$r_j$}}
\put(35,11.4){\makebox(0,0)[cc]{$e$}}
\put(35.64,25){\makebox(0,0)[cc]{$s$}}

\thicklines \linethickness{1pt}
\qbezier(15.76,15.61)(18.28,16.95)(22,18.88)
\qbezier(50.09,18.58)(53.74,16.65)(55.89,15.31)
\qbezier(22,18.88)(33.52,9.51)(50.09,18.58)
\put(54.07,16.6){\vector(2,-1){.07}}
\put(20,17.7){\vector(2,1){.07}}
\put(36.7,14.1){\vector(1,0){.07}}

\end{picture}
  \caption{}\label{ahm}
\end{figure}

Note that $r_1, \ldots , r_k$ are components of $p$.
First assume that not all of these components are isolated in $p$.
Suppose that $r_i$ is connected to $r_j$ for some $j>i$ and $j-i$
is minimal possible. Let $s$ denote the segment $[(r_i)_+,
(r_j)_-]$ of $p$, and let $e$ be a path of length at most $1$ in
$\G $ labeled by an element of $H_\lambda $ such that
$e_-=(r_i)_+$, $e_+=(r_j)_-$ (see Fig. \ref{ahm}). If $j=i+1$,
then $Lab (s)=g_{i+1} $. This contradicts the assumption
$g_{i+1}\notin E(u_i)$ since $Lab (s)$ and $Lab (e)$
represent the same element in $G$. Therefore, $j=i+1+l$ for some
$l\ge 1.$ Note that the components $r_{i+1}, \ldots , r_{i+1+l}$ are
isolated in the cycle $se^{-1}$. (Indeed otherwise we can pass to
another pair of connected components with smaller
value of $j-i$.) By Lemma \ref{Omega} we have $u_q^{n_q}\in \langle
\Omega \rangle $ for all $i+1\le q\le i+1+l$ and
$$
\sum\limits_{q=i+1}^{i+l+1} |u_q^{n_q}|_{\Omega }\le
Kl(se^{-1})=K (2k+2).
$$
Hence $|u_p^{n_p}|_{\Omega }\le K (2+2/k)\le 4K$ for at least
one $p$ which is impossible for large $n_p$. Thus all components
$r_1, \ldots , r_k $ are isolated in $p$. Applying now Lemma
\ref{Omega} again, we obtain
$$
\sum\limits_{q=1}^{m} |u_q^{n_q}|_{\Omega }\le Kl(p)=K (2k+2).
$$
This is again impossible for large $n_1,\ldots, n_k$.
\end{proof}

\subsection{Main results and the scheme of the proof}

Our main result is the following theorem.

\medskip
{\bf Theorem C.} [With constants] {\it  Let $\Gamma\in{\mathcal G}$.
A finitely generated $\Gamma$-group $H$ is $\Gamma$-universally
equivalent to  $\Gamma$ if and only if $H$ is embeddable into
$\Gamma ^{\mathbb{Z}[t]}$. }

\smallskip

 The group $\Gamma ^{\mathbb{Z}[t]}$ is discriminated by $\Gamma$. Indeed, it is enough to prove that any group $H$ obtained from $\Gamma$ by a finite series of extensions of centralizers is $\Gamma$-discriminated. We can obtain $H$ from $\Gamma$ in two steps. Let $K$ be a subgroup of $H$ that is obtained from $\Gamma$ by only extending centralizers of elements from parabolic subgroups. Then $K\in {\mathcal G}$ and $H$ is obtained from $K$ by a series of extensions of centralizers of hyperbolic elements. By Proposition \ref{bigpowers} applied to each centralizer extension, $H$ is discriminated by $K$. Since $K$ is discriminated by $\Gamma$ by Lemma \ref{Omega}, $H$ is also discriminated by $\Gamma$.

 The proof of the converse follows the argument in \cite{KMNull},
\cite{KMIrc} with
 necessary modifications. It splits into steps. In Section 3 we will
 prove

{\bf Theorem D.}  {\it Let $\Gamma\in\mathcal G$
 and $H$ a finitely generated group discriminated by $\Gamma$. Then $H$
embeds into an NTQ extension of $\Gamma$.}

\smallskip

\smallskip
In Section 4 we will prove

 {\bf Theorem E.}  {\it Let $\Gamma\in
\mathcal G$ and $\Gamma^\ast$ an NTQ extension of $\Gamma$.   Then
$\Gamma ^\ast$ embeds into a group $\Gamma (U,T)$ obtained from
$\Gamma$ by finitely many extensions of centarlizers.}

\section{Quadratic equations and NTQ systems and groups}
\begin{definition}
A standard quadratic equation over the group $G$ is an equation of
the one of the following forms (below $d,c_i$ are nontrivial
elements from $G$):
\begin{equation}\label{eq:st1}
\prod_{i=1}^{n}[x_i,y_i] = 1, \ \ \ n > 0;
\end{equation}
\begin{equation}\label{eq:st2}
\prod_{i=1}^{n}[x_i,y_i] \prod_{i=1}^{m}z_i^{-1}c_iz_i d = 1,\ \ \
n,m \geq 0, m+n \geq 1 ;
\end{equation}
\begin{equation}\label{eq:st3}
\prod_{i=1}^{n}x_i^2 = 1, \ \ \ n > 0;
\end{equation}
\begin{equation}\label{eq:st4}
\prod_{i=1}^{n}x_i^2 \prod_{i=1}^{m}z_i^{-1}c_iz_i d = 1, \ \ \ n,m
\geq 0, n+m \geq 1.
\end{equation}

Equations (\ref{eq:st1}), (\ref{eq:st2}) are called {\em orientable}
of genus $n$, equations (\ref{eq:st3}), (\ref{eq:st4}) are called
{\em non-orientable} of genus $n$.
\end{definition}

Let $W$ be a strictly quadratic word over a group $G$. Then there is
a $G$-automorphism $f \in Aut_G(G[X])$ such that  $W^f$ is a
standard quadratic word over $G.$

To each quadratic equation one can associate a punctured surface.
For example, the orientable surface associated to equation
\ref{eq:st2} will have genus $n$ and $m+1$ punctures.

\begin{definition}
Strictly quadratic words  of the type $ [x,y], \ x^2, \ z^{-1}cz,$
where $c \in G$, are called {\em atomic quadratic words} or simply
{\em atoms}.
\end {definition}

By definition a standard quadratic equation $S = 1$  over $G$  has
the form $$ r_1 \ r_2 \ldots r_{k}d  = 1,$$ where $r_i$ are atoms,
$d \in G$.  This number $k$ is called the {\it atomic rank } of this
equation,  we denote it by $r(S)$.

\begin{definition}  Let $S = 1$ be a standard quadratic equation written in the
atomic form
 $r_1r_2\ldots r_kd = 1 $ with $k \geq 2$.  A solution $\phi : G_{R(S)}
\rightarrow G$
 of $S = 1$  is called:
 \begin{enumerate}
 \item degenerate, if $r_i^\phi = 1$ for some $i$, and
 non-degenerate otherwise;
 \item  commutative, if $[r_i^{\phi},r_{i+1}^{\phi}]=1$ for all
$i=1,\ldots ,k- 1,$  and  non-commutative otherwise;
 \item in a general position, if $[r_i^{\phi},r_{i+1}^{\phi}] \neq 1$ for all
$i=1,\ldots ,k-1,$.
 \end{enumerate}
 \end{definition}

 Put
 $$\kappa(S) = |X| + \varepsilon(S),$$
  where $\varepsilon(S) = 1$  if $S$ of the type (\ref{eq:st2}) or
  (\ref{eq:st4}), and  $\varepsilon(S) = 0$ otherwise.

\begin{definition}\label{regular}
Let  $S=1$ be a standard quadratic equation over a group $G$ which
has a solution in $G$. The equation $S(X) = 1$ is  {\em regular} if
$\kappa(S) \geq 4$ (equivalently, the Euler characteristic of the
corresponding punctured surface is at most -2) and there is a
non-commutative solution of $S(X) = 1$ in $G$, or it is an equation
of the type $[x,y]d = 1$ or $[x_1,y_1][x_2,y_2]=1$.
\end{definition}

Let $G$ be a group with a generating set $A$. A system of equations
$S = 1$  is called {\em triangular quasi-quadratic} (shortly, TQ)
over $G$ if it can be partitioned into the following subsystems

\medskip
$S_1(X_1, X_2, \ldots, X_n,A) = 1,$

\medskip
$\ \ \ \ \ S_2(X_2, \ldots, X_n,A) = 1,$

$\ \ \ \ \ \ \ \ \ \  \ldots$

\medskip
$\ \ \ \ \ \ \ \ \ \ \ \ \ \ \ \ S_n(X_n,A) = 1$

\medskip \noindent
 where for each
$i$ one of the following holds:
\begin{enumerate}
\item [1)] $S_i$ is quadratic  in variables $X_i$;
 \item [2)] $S_i= \{[y,z]=1, [y,u]=1 \mid y, z \in X_i\}$ where $u$ is a
group word in $X_{i+1} \cup  \ldots \cup X_n \cup A$. In this case
we say that $S_i=1$ corresponds to an extension of a centralizer;
 \item [3)] $S_i= \{[y,z]=1 \mid y, z \in X_i\}$;
 \item [4)] $S_i$ is the empty equation.
  \end{enumerate}

Sometimes, we join several consecutive subsystems $S_i = 1, S_{i+1}
= 1, \ldots, S_{i+j}= 1$  of a TQ system $S = 1$ into one block,
thus partitioning   the system $S = 1$ into new blocks. It is
convenient to call a new system also a triangular quasi-quadratic
system.

In the notations above define $G_{i}=G_{R(S_{i}, \ldots, S_n)}$ for
$i = 1, \ldots, n$ and put $G_{n+1}=G.$ The  TQ system $S = 1$ is
called {\em non-degenerate} (shortly, NTQ) if the following
conditions hold:
 \begin{enumerate}
  \item [5)]  each system $S_i=1$, where $X_{i+1}, \ldots, X_n$
are viewed as the corresponding constants from $G_{i+1}$ (under the
canonical maps $X_j \rightarrow G_{i+1}$, $j = i+1, \ldots, n$)  has
a solution in $G_{i+1}$;
 \item [6)] the element  in $G_{i+1}$ represented by the word $u$ from 2) is  not
   a proper power in $G_{i+1}$.
  \end{enumerate}

 An NTQ system $S = 1$ is called {\em regular} if each non-empty quadratic
equation in $S_i$ is regular (see Definition \ref{regular}). The
coordinate group of an NTQ system (regular NTQ system) is called an
{\em NTQ group} (resp., {\em regular NTQ group}).

\section{Embeddings into NTQ extensions}

Let $\Gamma\in\mathcal G$. In this section we will prove Theorem D.
Namely, we will show how to embed a finitely generated fully
residually $\Gamma$ group into an NTQ extension of $\Gamma$.

\begin{theorem}[Theorem 1.1, \cite{Groves2}] Let $\Gamma\in\mathcal G$ and $G$ a
finitely generated freely indecomposable group with abelian JSJ
decomposition $\mathcal D$. Then there exists a finite collection $\{\eta
_i:G\rightarrow L_i\}_{i=1}^n$ of proper quotients of $G$ such that,
for any homomorphism $h:G\rightarrow \Gamma$ which is not equivalent
to an injective homomorphism there exists $h^{\prime}:G\rightarrow
\Gamma$ with $h\sim h'$ (the relation $\sim $ uses conjugation,
canonical automorphisms corresponding to $\mathcal D$ and "bending moves" ),
$i\in\{1,\ldots,n\}$ and $h_i:L_i\rightarrow \Gamma$ so that
$h'=\eta _ih_i.$ The quotient groups $L_i$ are fully residually
$\Gamma$.\end{theorem}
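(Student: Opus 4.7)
The plan is to prove this via a shortening argument of Rips--Sela type, adapted to the relatively hyperbolic setting. Suppose, for contradiction, that no such finite family $\{\eta_i\}$ exists. Then one can extract a sequence $\{h_n : G \to \Gamma\}$ of non-injective homomorphisms, pairwise inequivalent under the relation $\sim$, such that the intersection $\bigcap_n \ker h_n$ is trivial (if every proper quotient factored only finitely many $h_n$, one could take a diagonal sequence; a standard Koenig-lemma argument on the lattice of kernels produces a sequence whose kernel intersection is trivial). We want to derive a contradiction.

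First I would fix a finite generating set $S$ of $G$ and a relative generating set $X \cup \mathcal{H}$ for $\Gamma$, and define for each homomorphism $f : G \to \Gamma$ its length $|f| := \max_{s\in S} \dxh(1, f(s))$. Within the equivalence class $[h_n]_\sim$ we may replace $h_n$ by an element realizing $\inf |f|$; call such a representative short. The group generated by conjugation, by Dehn twists along the edges of $\mathcal D$, by modular automorphisms supported on QH (punctured-surface) vertex groups, and by bending moves across abelian vertices is exactly the modular group associated to the abelian JSJ decomposition $\mathcal D$, and by assumption each short $h_n$ is non-injective. Renormalize the Cayley graph $\G$ by $1/|h_n|$ and, using the Bestvina--Paulin compactness (extended to relatively hyperbolic groups by Osin, and used in precisely this form by Groves), pass to a nontrivial isometric $G$-action on an asymptotic $\mathbb R$-tree $T$ in which every parabolic subgroup $H_\lambda$ fixes a point. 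The kernel of this action is contained in $\bigcap_n \ker h_n = 1$, so the action is faithful.

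Next I would run the Rips--Sela--Guirardel machinery on the action $G \curvearrowright T$. Because the action is stable with abelian arc stabilizers (inherited from the fact that $\Gamma$ is CSA in the relatively hyperbolic sense, as noted just before Proposition \ref{bigpowers}), Guirardel's decomposition theorem produces a graph of actions whose pieces are simplicial, axial, and surface (IET) type. Matching this decomposition against $\mathcal D$, the rigid vertex groups must act elliptically on $T$ (otherwise they would split, contradicting rigidity in the JSJ), while the flexible vertex groups of $\mathcal D$ are precisely the pieces that carry non-trivial dynamics. For each non-elliptic piece one shows, by the standard shortening construction (Dehn twists on simplicial pieces, boundary twists and mapping-class group elements on surface pieces, bending moves on abelian pieces), that there is a modular automorphism $\alpha_n \in \mathrm{Mod}(\mathcal D)$ such that $|h_n \circ \alpha_n| < |h_n|$ for large $n$, contradicting shortness of $h_n$. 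Thus the action must be trivial, contradicting our construction, so the sequence $\{h_n\}$ cannot exist and a finite family of proper quotients suffices.

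The finite list $\{\eta_i : G \to L_i\}$ is then produced by induction on the well-founded order given by equational Noetherianity of $\Gamma$ (Theorem 5.16 of \cite{Groves2}): at each stage the non-injective short homomorphisms factor through strictly smaller quotients, and Noetherianity halts the process after finitely many steps. Each $L_i$ is fully residually $\Gamma$ because it is a limit of the quotients $G / \ker h_n$ in $\mathcal G_n$, and limits of $\Gamma$-discriminated groups are $\Gamma$-discriminated. The principal obstacle is the compactness step: guaranteeing convergence to an $\mathbb R$-tree when $\Gamma$ is only relatively hyperbolic requires controlling how parabolic subgroups act on $\G$, which is handled by Lemma \ref{Omega} on isolated components and by Lemma \ref{Eg} to ensure hyperbolic elements have well-defined maximal elementary subgroups; without these one cannot guarantee that parabolics act elliptically on $T$, and the Rips analysis would not apply cleanly.
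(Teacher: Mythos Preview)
The paper does not prove this statement at all: it is quoted as Theorem~1.1 of \cite{Groves2} and used as a black box input to the proof of Theorem~D. There is therefore no ``paper's own proof'' to compare your proposal against.

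That said, your outline is a fair high-level summary of the shortening argument Groves actually runs in \cite{Groves2}: extract a sequence of short, pairwise $\sim$-inequivalent non-injective homomorphisms, rescale and pass to a limiting $G$-action on an $\mathbb{R}$-tree via Bestvina--Paulin in the relatively hyperbolic setting, analyse the tree with Rips--Sela/Guirardel machinery, and shorten using modular automorphisms attached to $\mathcal D$ to contradict minimality. A couple of points are slightly off. First, the sentence ``Thus the action must be trivial, contradicting our construction'' is misplaced: the contradiction has already occurred one line earlier when you produce $\alpha_n$ with $|h_n\circ\alpha_n|<|h_n|$; the limit action is nontrivial by construction and stays that way. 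Second, your justification that each $L_i$ is fully residually $\Gamma$ (``limits of $\Gamma$-discriminated groups are $\Gamma$-discriminated'') is not quite the right mechanism; in Groves' argument the $L_i$ arise as $\Gamma$-limit groups directly, i.e.\ as limits of the marked groups $(\Gamma,h_n(S))$, which is what gives full residual $\Gamma$-ness via the equivalences in Theorem~A. Finally, the appeal to Lemmas~\ref{Omega} and~\ref{Eg} for the compactness step is not how Groves handles it; those lemmas in the present paper serve the Big Powers condition (Proposition~\ref{bigpowers}), not the asymptotic-cone/tree construction, which Groves carries out separately in \cite{Groves1,Groves2}.

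If your intent is merely to indicate why the cited theorem is plausible, the sketch is adequate; if you mean it as a self-contained proof, substantial technical work (stability of the limit action, abelian arc stabilisers, the detailed shortening moves in each Rips component, and the descending-chain argument producing the \emph{finite} list $\{\eta_i\}$) remains to be supplied, and for that one should simply cite \cite{Groves2} as the present paper does.
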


This theorem reduces the description of $Hom (G,\Gamma)$ to a
description of $Hom (L_i,\Gamma)_{i=1}^n$. We then apply it again to
each $L_i$ in turn and so on with successive proper quotients. Such
a sequence terminates by equationally Noetherian property.  Using
this theorem one can construct a $Hom$-diagram which is the same as
a so-called Makanin-Razborov constructed in Section 6 of
\cite{Groves2}.

The statement of the above theorem is still true if we replace the
set of all homomorphisms $h:G\rightarrow \Gamma$ by the set of all
$\Gamma$-homomorphisms. The proof is the same. Therefore, a similar
diagram can be constructed for $\Gamma$-homomorphisms $G\rightarrow
\Gamma$.

\medskip

{\em Proof of Theorem D.}  Let $G$ be a finitely generated freely indecomposable group discriminated by $\Gamma$. According to the construction of
Makanin-Razborov diagram the set $Hom (G,\Gamma )$ is divided into a
finite number of families. Therefore one of these families contains
a discriminating set of homomorphisms. Each family corresponds to a
sequence of fully residually $\Gamma$ groups (see \cite{KMel})
$$ G=G_0,G_1,\ldots ,G_n,$$ where $G_{i+1}$ is a proper quotient of
$G_i$ and $\pi _{i}:G_i\rightarrow G_{i+1}$ is an epimorphism.
Similarly to Lemma 16 from \cite{KMel}, for a discriminating family
$\pi _i$ is a monomorphism for the following subgroups $H$ in the
JSJ decomposition ${\mathcal D}_i$ of $G_i$
\begin{enumerate}
 \item  $H$ is  a rigid subgroup in ${\mathcal D}_i$;
  \item $H$ is  an edge subgroup in  ${\mathcal D}_i$;
 \item $H$ is  the subgroup of an abelian
vertex groups $A$ in ${\mathcal D}_i$  generated by the canonical images in $A$
of the edge groups of the edges of ${\mathcal D}_i$ adjacent to $A$.
\end{enumerate}

We need the following result.
 \begin{lemma}[Lemma 22, \cite{KMel}]
 \label{le:extend-hom-amalgam}\
 \begin{enumerate}
 \item[(1)] Let $H = A\ast_D B$, $D$ be abelian subgroup that is maximal abelian in $A$ or $B$, and $\pi:H \rightarrow {\bar
 H}$ be a homomorphism such that the restrictions of $\pi$ on $A$
 and $B$ are injective. Put
  $$H^\ast = \langle {\bar H}, y \mid [C_{{\bar H}}(\pi(D)),y] = 1
  \rangle.$$
   Then for every $u \in C_{H^\ast}((\pi(D))$,  $u \not \in C_{{\bar
   H}}(\pi(D))$, a  map
    $$\psi(x) = \left \{ \begin{array}{lr}  \pi(x), & \  x \in A,
    \\
                                    \pi(x)^u, & \ x \in B.
                         \end{array}
                         \right. $$
    gives rise to a monomorphism $\psi:H \rightarrow H^\ast$.

    \item[(2)] Let $H = \langle A, t \mid d^t = c, d\in D\rangle ,$ where $D$ is abelian and either $D$ or its image is maximal abelian in $A$,  and $\pi:H \rightarrow {\bar
 H}$ be a homomorphism such that the restriction of $\pi$ on $A$
 is injective. Put
  $$H^\ast = \langle {\bar H}, y \mid [C_{{\bar H}}(\pi(D)),y] = 1
  \rangle.$$
   Then for every $u \in C_{H^\ast}((\pi(D))$,  $u \not \in C_{{\bar
   H}}(\pi(D))$, a map
    $$\psi(x) = \left \{ \begin{array}{lr}  \pi(x), & \  x \in A,
    \\
                                    u\pi(x), & \ x = t.
                         \end{array}
                         \right. $$
    gives rise to a monomorphism $\psi:H \rightarrow H^\ast$.
    \end{enumerate}
\end{lemma}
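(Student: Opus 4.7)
The plan is to verify well-definedness of $\psi$ in each part and then to establish injectivity via a Britton normal-form analysis inside $H^\ast$, exploiting that $H^\ast$ is an HNN extension of $\bar H$ with stable letter $y$ and associated subgroup $C := C_{\bar H}(\pi(D))$ (identified to itself) — equivalently, the amalgamated product $\bar H \ast_C (C \times \langle y\rangle)$.

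For well-definedness in (1), the only amalgamation relation to check concerns $d \in D$: the $A$-computation gives $\psi(d) = \pi(d)$ and the $B$-computation gives $\pi(d)^u = u^{-1}\pi(d)u$, which agree because $u \in C_{H^\ast}(\pi(D))$. In (2), one verifies $\psi(t)^{-1}\psi(d)\psi(t) = \pi(t)^{-1}u^{-1}\pi(d)u\,\pi(t) = \pi(t)^{-1}\pi(d)\pi(t) = \pi(c)$, again by centrality of $u$.

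For injectivity in (1), I would take a nontrivial reduced word $h = z_0 z_1 \cdots z_k$ in $A \ast_D B$ (alternating $A$- and $B$-syllables, with intermediate syllables outside $D$) and analyze $\psi(h) \in H^\ast$. Using the standard normal form for extensions of centralizers, $C_{H^\ast}(\pi(D)) = C \times \langle y\rangle$, so we may write $u = c\,y^n$ with $c \in C$ and $n \neq 0$; absorbing $c$ into adjacent $\bar H$-syllables reduces to $u = y^n$. Then $\psi(h)$ is an alternating word in $\bar H$-syllables $\pi(z_i)$ and $y^{\pm n}$-syllables. To apply Britton's lemma, I need to rule out pinches $y^{\pm n} \cdot w \cdot y^{\mp n}$ with $w \in C$; such a pinch would force an intermediate syllable $\pi(z_i)$ to lie in $C$, i.e.\ to commute with $\pi(D)$. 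By injectivity of $\pi$ on the factor $A$ or $B$ containing $z_i$, this pulls back to $z_i \in C_A(D)$ or $z_i \in C_B(D)$, and the maximal-abelian hypothesis then forces $z_i \in D$, contradicting reducedness of $h$. Hence $\psi(h)$ is Britton-reduced and nontrivial, and $\psi$ is injective. Part (2) is argued identically: in a Britton-reduced HNN word for $h$, each replacement $\psi(t) = u\pi(t) = c\,y^n\pi(t)$ inserts a $y^{\pm n}$-syllable between adjacent $A$-syllables, and the same maximal-abelian argument precludes any new pinch.

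The main technical obstacle is the case where $D$ is maximal abelian in only one of $A, B$: a syllable $z_i$ from the ``bad'' side could a priori satisfy $\pi(z_i) \in C \setminus \pi(D)$ without $z_i \in D$. I would handle this by tracking cascading pinches: any pinch at a bad-side syllable, after using $[C, y^{\pm n}] = 1$ to commute $C$-elements past $y$'s and merging adjacent $\bar H$-syllables, forces a neighbouring syllable to lie in $C$ as well; iterating, one eventually reaches a good-side syllable to which the maximal-abelian hypothesis applies, yielding a contradiction and completing the proof.
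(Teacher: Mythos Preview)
The paper does not prove this lemma at all; it is quoted verbatim from \cite{KMel} (Lemma~22 there) and used as a black box. So there is no in-paper argument to compare yours against. That said, your approach via Britton normal forms in the HNN extension $H^\ast=\bar H\ast_C(C\times\langle y\rangle)$ is the standard one, and your well-definedness checks and the injectivity argument in the symmetric situation (when the syllable under a potential pinch lies in the factor where $D$ is maximal abelian) are correct.

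The gap is in your treatment of the one-sided case. You write that a pinch at a bad-side syllable ``forces a neighbouring syllable to lie in $C$ as well'' and that iterating ``eventually reaches a good-side syllable''. Neither claim is accurate as stated. After a pinch at a bad-side syllable $\pi(b_j)\in C$, the neighbours merge into a single $\bar H$-syllable $\pi(a_{j-1})\pi(b_j)\pi(a_{j+1})$; a further pinch there requires the \emph{merged product} to lie in $C$, which does not force either $\pi(a_{j\pm1})$ individually into $C$. Worse, the reduction can run to completion without ever testing a good-side syllable alone: for $h=a_0b_1a_2b_3a_4$ (with $D$ maximal abelian in $A$), the only admissible pinches are at $\pi(b_1)$ and $\pi(b_3)$, after which all $y$'s are gone and one is left with the bare condition $\pi(h)=1$ in $\bar H$, together with $\pi(b_1),\pi(b_3)\in C\setminus\pi(D)$. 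No good-side syllable was ever ``reached''. One then needs a separate argument that these constraints are jointly impossible; for length~$3$ it is easy ($\pi(a_0)^{-1}\pi(a_2)^{-1}=\pi(b_1)\in C$ forces $a_0^{-1}a_2^{-1}\in D$, hence $b_1\in D$), but the general step is not the cascade you sketch and uses the ambient CSA structure of $\bar H$ more substantially. You should either supply that argument or cite \cite{KMel} as the paper does.
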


Let now ${\mathcal D}$ be an abelian JSJ decomposition of $G$. Combining foldings and slidings, we can transform ${\mathcal D}$
into an abelian decomposition in which each vertex with non-cyclic
abelian subgroup that is connected to some rigid vertex, is
connected to only one vertex which is rigid. We suppose from the
beginning that ${\mathcal D}$ has this property. Let $G_1$ be the fully residually $\Gamma$ proper quotient of $G$ on the next level of the Makanin-Razborov diagram, and $\pi$ be the canonical epimorphism $\pi :G\rightarrow G_1$.  Let $G_1=P_1\ast\cdots\ast P_{\alpha}\ast F,$  be the Grushko
decomposition of $G_1$ relative to the set of all rigid subgroups and edge subgroups of ${\mathcal D}$.
Here $F$ is the free factor and each $P_i$ is freely indecomposable modulo rigid subgroups and edge subgroups of ${\mathcal D}.$

We will
construct a canonical extension $G^\ast$ of $\bar G=P_1\ast\cdots\ast P_{\alpha}$ which is a
fundamental group of the graph of groups $\Lambda$ obtained from a
single vertex $v$ with the associated vertex group $G_v=\bar G$ by
adding finitely many edges corresponding to extensions of
centralizers (viewed as amalgamated products) and finitely many
QH-vertices connected only to $v$.
By construction of $\bar G$, each
factor in this decomposition contains a conjugate of the image of
some rigid subgroup or an edge group in ${\mathcal D}$.
Indeed, the Grushko decomposition of $\bar G$ is non-trivial only if the
fundamental groups of some separating simple closed curves on the surfaces corresponding to $QH$ subgroups of $\mathcal D$ are mapped by $\pi$ to the identity element. Such curves cut the surface into pieces, and the fundamental groups of all the pieces that are not attached to rigid subgroups are mapped into $F$.

Let
$g_1,\ldots,g_l$ be a fixed finite generating set of ${\bar G}$. For
an edge $e \in {\mathcal D}$ we fix a tuple of generators  $d_e$ of
the abelian edge group $G_e$.
 The required extension $G^\ast$ of ${\bar G}$ is
constructed  in three steps. On each step we extend the centralizers
$C_{\bar G}(\pi(d_e))$ of some edges $e$ in ${\mathcal D}$ or add a
QH subgroup. Simultaneously, for every edge $e \in {\mathcal D}$ we
associate an element $s_e \in C_{G^\ast}(\pi(d_e))$.

{\em Step 1.} Let $E_{rig}$ be the set of all edges between rigid
subgroups in $\mathcal D$.  One can define an equivalence relation
$\sim$ on $E_{rig}$  assuming for $e, f \in E_{rig}$ that
 $$e \sim f \Longleftrightarrow \exists g_{ef} \in {\bar G}
 \left ( g_{ef}^{-1}C_{\bar G}(\pi(e))g_{ef} = C_{\bar G}(\pi(f)) \right).
  $$
Let $E$ be a set of representatives of equivalence classes of
$E_{rig}$ modulo $\sim$. Now we construct a group $G^{(1)}$  by
extending every centralizer $C_{\bar G}(\pi(d_e))$ of ${\bar G}$, $e
\in E$ as follows. Let
$$[e] = \{e = e_1, \ldots, e_{q_e}\}$$
and $y_e^{(1)}, \ldots, y_e^{(q_e)}$  be new letters corresponding
to the elements in $[e]$. Then put
 $$G^{(1)} = \langle {\bar G}, y_e^{(1)},
\ldots, y_e^{(q_e)} (e \in E) \mid [C(\pi(d_e)),y_e^{(j)}]
 = 1, [y_e^{(i)},y_e^{(j)}] = 1 (i,j = 1, \ldots, q_e)\rangle.$$

One can  associate with $G^{(1)}$  the following system of equations
over $\bar G$:

\begin{equation}\label{eq:edges}
[{\bar g_{es}},y_e^{(j)}]=1,\ [y_e^{(i)},y_e^{(j)}] = 1, \ \ i,j= 1,
\ldots, q_e, \ s = 1, \ldots, p_e, \ e\in E,
\end{equation}
 where $y_e^{(j)}$ are new variables and the elements ${\bar g_{e1}}, \ldots, {\bar g_{ep_e}}$
 are constants from ${\bar G}$  which generate  the centralizer $C(\pi(d_e))$.
 We assume that the constants ${\bar g_{ej}}$
 are given as words in the generators $g_1, \ldots, g_l$ of ${\bar
 G}$. We associate with the edge $e_i\in [e]$ an element $s_{e_i}$ that is the conjugate of $y_e^{(i)}$ from $C_{G^{(1)}}(\pi(d_{e_i}))$.

{\em  Step 2.} Let $A$ be a non-cyclic abelian vertex group in
$\mathcal D$ and $A_e$ the subgroup of $A$ generated by the images
in $A$ of the edge groups of edges adjacent to $A$. Then $A =
Is(A_e) \times A_0$ where $Is(A_e)$ is the isolator of $A_e$ in $A$
(the minimal direct factor containing $A_e$) and $A_0$ a  direct
complement of $Is(A_e)$ in $A$. Notice, that the restriction of
$\pi_1$ on $Is(A_e)$ is a monomorphism (since $\pi _1$ is injective
on $A_e$ and $A_e$ is of finite index in $Is(A_e)$). For each
non-cyclic abelian vertex group $A$ in $\mathcal D$ we extend the
centralizer of $\pi _1(Is(A_e))$ in $G^{(1)}$ by the abelian group
$A_0$ and denote the resulting group by $G^{(2)}$. Observe, that
since $\pi _1(Is(A_e)) \leq {\bar G}$ the group $G^{(2)}$ is
obtained from ${\bar G}$ by extending finitely many centralizers of
elements from ${\bar G}$.

 If the  abelian group $A_0$ has rank
$r$ then the system of equations associated with the abelian vertex
group $A$ has the following form
\begin{equation}\label{eq:barS} [y_p, y_q]=1, [y_p, {\bar d_{ej}}]=1, \ \
 \  p,q=1,\ldots ,r, j = 1, \ldots, p_e,\end{equation}
 where $y_p, y_q$ are new variables and the elements ${\bar d_{e1}}, \ldots, {\bar
d_{ep_e}}$
 are constants from ${\bar G}$  which generate  the subgroup $\pi(Is(A_e))$.
 We assume that the constants ${\bar d_{ej}}$
 are given as words in the generators $g_1, \ldots, g_l$ of ${\bar
 G}$.

{\em Step 3.} Let $Q$ be a non-stable QH subgroup in $\mathcal D$ (not mapped by $\pi$ into the same QH subgroup).
Suppose $Q$ is given by a presentation
$$\prod _{i=1}^{n}[x_i, y_i]p_1\cdots p_{m}=1.$$
 where  there are exactly $m$
outgoing edges $e_1,\ldots ,e_m$ from $Q$  and $\sigma(G_{e_i})=
\langle p_i\rangle $,  $\tau(G_{e_i}) = \langle c_i\rangle $ for
each edge $e_i$. We add a QH vertex $Q$ to $G^{(2)}$ by introducing
new generators and the following quadratic relation

\begin{equation}\label{QQQC}\prod _{i=1}^{n}[ x_i,
y_i](c_1^{\pi_1})^{z_1}\cdots
(c_{m-1}^{\pi_1})^{z_{m-1}}c_m^{\pi_1}=1\end{equation}
 to the presentation of $G^{(2)}$.
Observe, that in the relations (\ref{QQQC}) the coefficients in the
original quadratic relations for $Q$ in $\mathcal D$ are
 replaced by their images in $\bar G$.

Similarly,  one introduces QH vertices for non-orientable QH
subgroups in $\mathcal D$.

The resulting group is denoted by $G^\ast = G^{(3)}$.

We  define a ($\Gamma$)-homomorphism $\psi:G \rightarrow G^\ast $
with respect to the splitting $\mathcal D$ of $G$ and will prove
that it is a monomorphism. Let $T$ be the maximal subtree of
$\mathcal D$. First, we define $\psi$ on the fundamental group of
the graph of groups induced from $\mathcal D$ on $T$.
 Notice that if we consider only $\Gamma$-homomorphisms, then the subgroup  $\Gamma$ is elliptic in $\mathcal D$, so there is a
rigid vertex $v_0 \in T$ such that $\Gamma \leq G_{v_0}$. Mapping
$\pi$ embeds $G_{v_0}$ into ${\bar G}$, hence into $G^\ast$.

 Let $P$ be a path $v_0 \rightarrow v_1 \rightarrow \ldots
\rightarrow v_n$ in $T$ that starts at $v_0$. With each edge $e_i =
(v_{i-1} \rightarrow v_i)$ between two rigid vertex groups  we have
already associated the  element $s_{e_i}$. Let us associate elements
to other edges of $P$:

a) if $v_{i-1}$ is a rigid vertex, and $v_i$ is either abelian or
QH, then $s_{e_i}=1$;

b) if $v_{i-1}$ is a QH vertex, $v_i$ is rigid or abelian, and the
image of $e_i$ in the decomposition  ${\mathcal D}^*$ of $G^*$ does
not belong to $T^*$, then $s_{e_i}$ is the stable letter
corresponding to the image of $e_i$;

c) if $v_{i-1}$ is a QH vertex  and $v_i$ is rigid or abelian, and
the image of $e_i$ in the decomposition of $G^*$ belongs to $T^*$,
then $s_{e_i}=1$.

d) if $v_{i-1}$ is an abelian vertex with $G_{v_{i-1}}=A$   and
$v_i$ is a QH vertex, then $s_{e_i}$ is an element from $A$ that
belongs to $A_0$.

Since two abelian vertices  cannot be connected by an edge in
$\Gamma$, and we can suppose that  two QH vertices are not connected
by an edge, these are all possible cases.

We now define the embedding $\psi$ on the fundamental group
corresponding to the path $P$ as follows:
 $$\psi(x) = \pi(x)^{s_{e_i}\ldots s_{e_1}}\ \verb for \ x\in G_{v_i}.$$
This map is a monomorphism by Lemma \ref{le:extend-hom-amalgam}.
Similarly  we define $\psi$ on the fundamental group of the graph of
groups induced from $\mathcal D$ on $T$. We  extend it to $G$ using
the second statement of  Lemma \ref{le:extend-hom-amalgam}.

Recursively applying this procedure to $G_1$ and so on, we will
construct the NTQ group $N$ such that $G$ is embedded into $N$.
Theorem D is proved.

\section {Embedding of NTQ groups into
$G(U,T).$}

An NTQ group $H$ over $\Gamma$ is obtained from $\Gamma$ by a series
of extensions:
$$ \Gamma =H_0<H_1<\ldots H_n=H,$$ where for each $i=1,\ldots , n$, $H_i$ is either an extension of a centralizer in $H_{i-1}$ or the coordinate group of a regular quadratic equation
over $H_{i-1}$. In the second case, equivalently,  $H_i$ is the
fundamental group of the graph of groups with two vertices, $v$ and
$w$ such that $v$ is a QH vertex with QH subgroup $Q$, and $H_{i-1}$
is the vertex group of the second vertex $w$. Moreover,  there is a
retract from $H_i$ onto $H_{i-1}$. In this section we will prove the
following theorem which, by induction, implies Theorem E.
\begin{theorem} \label{quadr} Let $H$ be the fundamental group of the graph of
groups with two vertices, $v$ and $w$ such that $v$ is a QH vertex
with QH subgroup $Q$,  $H_w=\Gamma \in{\mathcal G},$ and there is a
retract from $H$ onto $\Gamma$ such that $Q$ corresponds to a
regular quadratic equation. Then $H$ can be embedded into a group
obtained from $\Gamma$ by a series of extensions of
centralizers.\end{theorem}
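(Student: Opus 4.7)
The plan is to exhibit an explicit $\Gamma$-embedding $\phi:H \to \Gamma(U,T)$ into an iterated extension of centralizers over $\Gamma$, following the pattern of the free group argument in \cite{KMIrc} but with Proposition \ref{bigpowers} replacing the classical big-powers statement for free groups. Let $\rho:H \to \Gamma$ be the given retract and let $S(X)=1$ be the regular standard quadratic equation whose $\Gamma$-coordinate group is $Q$, written in atomic form $r_1 r_2 \cdots r_k d = 1$. Restricting $\rho$ to $Q$ yields a particular solution $\xi_0$ of $S=1$ in $\Gamma$; composing with a canonical automorphism of $Q$ from its modular group, I may arrange $\xi_0$ to be non-commutative in the sense of Section 2, using that $S=1$ is regular.

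Next I would build $\Gamma(U,T)$ by successively extending, for each atom $r_i$, the centralizer of a hyperbolic element $u_i \in \Gamma$ chosen so that $r_i^{\xi_0} \in E_\Gamma(u_i)$ --- for a commutator or square atom $u_i$ may be taken to be a primitive root of $r_i^{\xi_0}$, and for a $z$-atom $z_j^{-1} c_j z_j$ a primitive root of $\rho(z_j)$ (or of $c_j$) is appropriate. Denote the new stable letter commuting with $C_\Gamma(u_i)$ by $t_i$. Define $\phi$ to agree with $\rho$ on $\Gamma$ and to send each QH generator so that $\phi(r_i)$ becomes a $t_i^{N_i}$-conjugate of $r_i^{\xi_0}$ for large exponents $N_i$ to be fixed later. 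The resulting image $\phi(r_1 \cdots r_k d)$ is then a word of exactly the shape $g_0 u_{i_1}^{M_1} g_1 \cdots u_{i_k}^{M_k} g_k d$ handled by Proposition \ref{bigpowers}.

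To verify injectivity I would exploit that $\Gamma(U,T)$ is $\Gamma$-discriminated by $\Gamma$ (as shown in the discussion immediately after Theorem C): specializing $t_i \mapsto u_i^{P_i}$ for large $P_i$ gives a family of $\Gamma$-homomorphisms $\Gamma(U,T) \to \Gamma$. Given a nontrivial $h \in H$, the retract together with Bass-Serre theory puts $h$ into a normal form alternating $Q$-syllables with $\Gamma$-syllables attached along the boundary edges; applying $\phi$ and then a sufficiently generic specialization produces a word in $\Gamma$ of the shape required by Proposition \ref{bigpowers}, which is therefore nontrivial in $\Gamma$, hence $\phi(h) \neq 1$ in $\Gamma(U,T)$.

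The main obstacle will be verifying the non-centralizing hypothesis of the big-powers proposition at every junction of this word. This splits into (a) internal junctions between consecutive atoms, handled by the non-commutative solution condition and hence by regularity of $S=1$, and (b) junctions with the boundary constant $d$ and with the interposed $\Gamma$-syllables coming from the Bass-Serre normal form, which require a careful choice of the $u_i$ and of the conjugating prefixes so that the relevant $\rho$-images do not lie in $E_\Gamma(u_i)$. The two low-genus exceptional equations $[x,y]d=1$ and $[x_1,y_1][x_2,y_2]=1$, together with the non-orientable cases, are handled by the same pattern after the standard modifications from \cite{KMIrc}.
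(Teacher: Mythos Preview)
Your map $\phi$ as described is not injective. The only way to ``send each QH generator so that $\phi(r_i)$ becomes a $t_i^{N_i}$-conjugate of $r_i^{\xi_0}$'' for a commutator atom $r_i=[x_i,y_i]$ is to set $\phi(x_i)=t_i^{-N_i}\xi_0(x_i)t_i^{N_i}$ and $\phi(y_i)=t_i^{-N_i}\xi_0(y_i)t_i^{N_i}$ (or something equivalent up to inner automorphism). But then $\phi$ restricted to the free subgroup $\langle x_i,y_i\rangle\le Q$ is just $\xi_0$ followed by a conjugation, so its kernel equals $\ker\bigl(\xi_0|_{\langle x_i,y_i\rangle}\bigr)$. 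Nothing in the hypothesis ``$\xi_0$ is non-commutative'' (or even ``in general position'') forces $\xi_0(x_i),\xi_0(y_i)$ to generate a free subgroup of $\Gamma$; for instance $\xi_0(x_i)\xi_0(y_i)=1$ is perfectly possible, and then $\phi(x_iy_i)=1$ while $x_iy_i\neq 1$ in $H$. Consequently the big-powers argument never gets off the ground for words lying inside a single atom. A second, independent gap: Proposition~\ref{bigpowers} is stated only for \emph{hyperbolic} elements $u_i$, and you give no reason why the roots of $r_i^{\xi_0}$ (or of the boundary constants $c_j$) can be taken hyperbolic; for a $z$-atom with parabolic $c_j$ this is a genuine obstruction.

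The paper's proof avoids both issues by working harder on the solution itself rather than on the embedding map. It constructs (Propositions~\ref{prop5}--\ref{propmain}, via the $(t_1,\ldots,t_5)$-type tricks of Lemma~\ref{3.4}) a solution $\delta$ into an iterated centralizer extension of $\Gamma\ast F$ such that each $\langle x_i^{\delta},y_i^{\delta}\rangle$ is \emph{already} free of rank~$2$, and moreover the subgroups $\langle q_j^{\delta},r_{j+1}^{\delta}\rangle$, $\langle q_j^{\delta},x_{j+1}^{\delta}\rangle$ are free. This lets one cut $S_Q$ along a family of simple closed curves into pieces of Euler characteristic~$-1$, each of which is mapped injectively by $\delta$; the pieces are then reassembled into a monomorphism using Lemma~\ref{le:extend-hom-amalgam}, not Proposition~\ref{bigpowers}. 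If you want to repair your approach you must replace the naive conjugation by a construction that forces the atom-variable pairs to have free image; that is essentially the content of Section~4.
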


The idea of the proof of this theorem is as follows. Let $S_Q$ be a
punctured surface corresponding to the QH vertex group in this
decomposition (denote the decomposition by $\mathcal D$) of $H$. We
will find in Proposition \ref{propmain} a finite collection of
simple closed curves on $S_{Q}$ and a homomorphism $\delta
:H\rightarrow K,$ where $K$ is an iterated centralizer extension of
$\Gamma *F$,
 with the following properties:

1) $\delta$ is a retraction on $\Gamma$,

2) each of the simple closed curves in the collection and all
boundary elements of $S_Q$ are mapped by $\delta$ into non-trivial
elements of $K$,

3) each connected component of the surface obtained by cutting
$S_{Q}$ along this family of s.c.c. has Euler characteristic -1,

4) the fundamental group of each of these connected components is
mapped monomorphically into a 2-generated free subgroup of $K$.

 Given this collection of
s.c.c. on the surface associated with the QH-vertex group in the
decomposition $\mathcal D$, one can extend $\mathcal D$ by further
splitting the QH-vertex groups along the family of simple closed
curves described above. Now the statement of Theorem \ref{quadr}
would follow from Lemma \ref{le:extend-hom-amalgam}.

\begin{prop} [\cite{KMNull}, Prop.3]
\label{prop1} Let $S=1$ be a nondegenerate standard quadratic
equation over a CSA-group $G.$  Then either $S = 1$ has a solution
in general position, or every nondegenerate solution of $S = 1$ is
commutative.
\end{prop}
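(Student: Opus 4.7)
The plan is to argue by contrapositive: assume $S=1$ has a nondegenerate non-commutative solution $\phi_{0}$, so $[r_{j}^{\phi_{0}}, r_{j+1}^{\phi_{0}}]\neq 1$ for some $j$, and construct from it a nondegenerate solution $\phi$ in general position. For a nondegenerate solution $\phi$ set $B(\phi) = \{i\mid [r_{i}^{\phi}, r_{i+1}^{\phi}] = 1\}$, its set of bad indices; the aim is to produce $\phi$ with $B(\phi) = \emptyset$. I would induct on $|B(\phi_{0})|$, showing that whenever $B(\phi_{0})$ is nonempty a suitable modification $\phi_{1}$ (still nondegenerate and non-commutative) has $|B(\phi_{1})| < |B(\phi_{0})|$.

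The main tool is the family of $G$-automorphisms of $G_{R(S)}$ coming from the mapping class group of the associated punctured surface $\Sigma$ acting relative to its boundary (so the constants $c_{j}$ and $d$ are preserved). For a bad index $i\in B(\phi_{0})$ I would choose a Dehn-twist-type automorphism $\tau_{i}$ corresponding to a simple closed curve separating the first $i$ atoms from the rest of $\Sigma$, and form the nondegenerate family $\phi_{n} = \phi_{0}\circ \tau_{i}^{n}$. By construction, $\tau_{i}^{n}$ fixes $r_{\ell}$ for $\ell\le i$ and conjugates $r_{\ell}$ for $\ell > i$ by the $n$-th power of a fixed element $w_{i}\in G_{R(S)}$ built from the subproduct $r_{1}\cdots r_{i}$. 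Hence the commutation status of the pair $(r_{\ell}, r_{\ell+1})$ is preserved for every $\ell\neq i$ --- for $\ell < i$ both atoms are fixed, and for $\ell > i$ both are simultaneously conjugated by the same element $(w_{i}^{\phi_{0}})^{n}$, so their commutator is conjugated but not otherwise altered --- and only the pair at the bad index $i$ itself can change commutation status.

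The decisive input is the CSA property of $G$: centralizers of nontrivial elements are malnormal maximal abelian subgroups. A consequence is that for any nontrivial $a,b\in G$ and any $g\notin C_{G}(a)$, the set $\{n\in\mathbb{Z}\mid [a, g^{n}bg^{-n}] = 1\}$ has at most one element: two solutions $n_{1}\neq n_{2}$ would force $b$ into two distinct conjugates of the malnormal subgroup $C_{G}(a)$, the distinctness following because $g^{n_{1}-n_{2}}\in C_{G}(a)$ would imply $g\in C_{G}(a)$ in a CSA group (since $g$ centralizes $g^{n_{1}-n_{2}}$). Applying this to $a = r_{i}^{\phi_{0}}$, $b = r_{i+1}^{\phi_{0}}$, and $g = w_{i}^{\phi_{0}}$, provided $w_{i}^{\phi_{0}}\notin C_{G}(r_{i}^{\phi_{0}})$, one sees that all integers $n\neq 0$ satisfy $[r_{i}^{\phi_{n}}, r_{i+1}^{\phi_{n}}]\neq 1$, so any such $n$ yields $|B(\phi_{n})|<|B(\phi_{0})|$, completing the induction. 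The main technical obstacle is twofold: first, rigorously constructing $\tau_{i}$ as a genuine $G$-automorphism of $G_{R(S)}$ in each atomic type \eqref{eq:st1}--\eqref{eq:st4} --- since $d$ cannot be conjugated, the naive Dehn twist on $\pi_{1}(\Sigma)$ must be compensated by an inner correction preserving the relation $r_{1}\cdots r_{k}d = 1$; second, handling the degenerate case $w_{i}^{\phi_{0}}\in C_{G}(r_{i}^{\phi_{0}})$, which would make the twist $\tau_{i}$ useless, and therefore requires choosing a different separating curve on $\Sigma$ that exploits the known non-commutativity of $\phi_{0}$ at index $j$.
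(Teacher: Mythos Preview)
The paper does not prove this proposition; it is quoted from \cite{KMNull} (Proposition~3 there) and invoked without argument throughout Section~4. There is thus no in-paper proof to compare your proposal against.

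Assessed on its own merits, your strategy --- precompose $\phi_0$ with powers of a Dehn twist along a separating curve, observe that only the commutator at the separating index can change status, and use malnormality of centralizers to show that at most one power is bad --- is the correct framework and matches the method of the original source. The CSA computation you sketch (at most one $n$ with $[a,g^{n}bg^{-n}]=1$ provided $g\notin C_G(a)$) is valid.

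The genuine gap is exactly where you place it, but it is the substance of the proof rather than a loose end. The obstruction $w_i^{\phi_0}\in C_G(r_i^{\phi_0})$ is generic, not exceptional: if the bad index $i$ lies in a block of consecutive bad indices, commutation transitivity forces all the corresponding $r_\ell^{\phi_0}$ into a single maximal abelian subgroup, and then $w_i^{\phi_0}=(r_1\cdots r_i)^{\phi_0}$ centralizes $r_i^{\phi_0}$ for every $i$ in that block, so none of those twists moves you forward. Your fallback --- ``choose a different separating curve exploiting non-commutativity at $j$'' --- is a hope, not an argument: you must exhibit the curve, prove that its twist element escapes the relevant centralizer, and verify that the new twist neither destroys the good index $j$ nor creates fresh bad indices elsewhere. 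That case analysis, carried out atom-type by atom-type in \cite{KMNull}, is what actually establishes the inductive step, and it is absent from your outline.
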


 Proving the theorem we will consider the following three cases
for the equation corresponding to the QH subgroup $Q$: orientable of
genus $\geq 1,$ genus = 0, and non-orientable of genus $\geq 1.$ For
an orientable equation of genus $\geq 1$ we have the following
proposition.

\begin{prop}\label{prop5}(Compare [\cite{KMNull}, Prop.4])
Let $S: \prod_{i = 1}^{i = m}[x_i,y_i] \prod_{j = 1}^{j =
n}c_j^{z_j}g^{-1}=1$ ($m \geq 1, n \geq 0$) be a nondegenerate
standard quadratic equation over a  group $G\in\mathcal G.$  Then $S
= 1$ has a solution in general position in some group $H$ which is
an iterated extension of centralizers of $G*F$ (where $F$ is a free
group) unless $S = 1$ is the equation $[x_1,y_1][x_2,y_2] = 1$ or
$[x,y]c^z = 1.$ This solution can be chosen so that the images of
$x_i$ and $ y_i$ generate a free subgroup (for each $i=1,\ldots m$).
\end{prop}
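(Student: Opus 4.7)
The plan is to adapt the free-group argument of \cite{KMNull}, Prop.~4 to our setting, replacing the classical big-powers condition with Proposition~\ref{bigpowers}. Both the construction of the solution and the verification that consecutive atoms do not commute are driven by the idea of multiplying coordinates by very high powers of new generators arising from extensions of centralizers of hyperbolic elements, and then invoking the big-powers condition to rule out accidental commutations.

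First, fix a nondegenerate solution $\phi : G_{R(S)}\to G$ of $S=1$; such a $\phi$ exists by hypothesis. Let $F$ be a free group with basis $\{a_i,b_i\}_{i=1}^m$ and work inside $G*F$, which still lies in $\mathcal{G}$ (with the same free abelian parabolics as $G$) and hence still satisfies Proposition~\ref{bigpowers}. Attach to each pair $(x_i,y_i)$ a hyperbolic element $u_i\in G*F$ built from $a_i,b_i$, and note that each $c_j$ is already hyperbolic in $G$. Iteratively extend the centralizers $C(u_i)$ and $C(c_j)$ by new commuting generators $t_i$ and $s_j$, producing an iterated centralizer extension $H$ of $G*F$.

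Define a candidate solution by
$$\psi(x_i)=a_i\,t_i^{N_i},\qquad \psi(y_i)=b_i,\qquad \psi(z_j)=\phi(z_j)\,s_j^{M_j},$$
and, using one further centralizer extension, adjust the rightmost coordinate so that $\prod_i [\psi(x_i),\psi(y_i)]\prod_j c_j^{\psi(z_j)}=g$. For $|N_i|,|M_j|$ large enough the atoms $A_i:=[\psi(x_i),\psi(y_i)]$ and $B_j:=c_j^{\psi(z_j)}$ are all hyperbolic in $H$, and any alleged commutation of two consecutive atoms would, using the CSA property, place them in a common cyclic subgroup, from which one extracts a product of the form excluded by Proposition~\ref{bigpowers}, a contradiction. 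Thus general position holds. Freeness of $\langle \psi(x_i),\psi(y_i)\rangle$ follows because, killing $G$ together with all the new centralizer generators $t_i,s_j$, the images $\psi(x_i),\psi(y_i)$ surject onto $a_i,b_i$, which generate a rank-$2$ free subgroup of $F$.

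The main obstacle is the coherent choice of the $u_i$ and of the exponents $N_i,M_j$ so that three requirements hold simultaneously: the equation $S=1$ is satisfied on the nose, every pair of consecutive atoms falls under the hypotheses of Proposition~\ref{bigpowers} (in particular, the ``interstitial'' elements between the high powers lie outside the relevant centralizers), and the freeness of $\langle \psi(x_i),\psi(y_i)\rangle$ persists through the final correcting adjustment. The excluded cases $[x_1,y_1][x_2,y_2]=1$ and $[x,y]c^z=1$ are genuine obstructions: each involves only two atoms, and the equation forces one atom to be the inverse of the other, so those atoms must commute regardless of the ambient group, and no general-position solution can exist.
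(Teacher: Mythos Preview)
Your proposal has a genuine gap at the most critical point: the candidate map $\psi$ does not satisfy the equation, and the promised fix---``using one further centralizer extension, adjust the rightmost coordinate''---cannot work as stated. If $n\ge 1$ the rightmost atom is $c_n^{z_n}$, whose image ranges only over the conjugacy class of $c_n$; if $n=0$ the last atom is a commutator $[x_m,y_m]$, whose image must be a commutator of the chosen $x_m^\psi$ with \emph{something}. In neither case can a single coordinate absorb an arbitrary defect $D$ with $\prod_i[\psi(x_i),\psi(y_i)]\prod_j c_j^{\psi(z_j)}=gD$, and passing to a further centralizer extension $\langle H,t\mid [C_H(v),t]=1\rangle$ does not create new $H$--conjugacies between elements of $H$ outside $C_H(v)$. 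Since your commutator atoms $[a_it_i^{N_i},b_i]$ are built from fresh free letters with no relation to $g$ or the $c_j$, the defect $D$ is generic and there is no mechanism to kill it. (A side issue: you assert that ``each $c_j$ is already hyperbolic in $G$'', but the coefficients $c_j$ may well be parabolic.)

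The paper's proof avoids this problem by never leaving the solution set: it starts from a given solution and \emph{transforms} it through automorphism-like moves and specifically engineered centralizer extensions that keep the product of atoms equal to $g$ throughout. For instance, in the case $[x_1,y_1][x_2,y_2]=g$ with $[a,b][c,d]=g$, the paper introduces $t_1,\dots,t_5$ with the chain of relations $[t_1,b]=[t_2,t_1a]=[t_3,d]=[t_4,t_3c]=[t_5,t_2bc^{-1}t_3^{-1}]=1$ and sets $x^\psi=t_5^{-1}t_1a$, $y^\psi=(t_2b)^{t_5}$, $x_2^\psi=(t_3c)^{t_5}$, $y_2^\psi=t_5^{-1}t_4d$; these particular choices make the two new commutators telescope back to $[a,b][c,d]$. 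General position and freeness are then read off from the normal-form structure of the iterated HNN extension (no big-powers estimate is invoked here). The argument proceeds by induction on the atomic rank, treating the cases $n=0$, then $m=1$ with $n\ge 1$, then $m\ge 2$ with $n\ge 1$ separately, each time reducing to a shorter equation for which a general-position solution is already in hand. Your scheme could perhaps be salvaged by building $\psi$ as a perturbation of $\phi$ rather than from scratch, but then the ``kill $G$ and the $t_i$'' argument for freeness of $\langle\psi(x_i),\psi(y_i)\rangle$ no longer applies, and one is pushed back to an explicit construction of the paper's type.
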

{\em Proof of Proposition \ref{prop5}.} Let $n = 0$. In this event
we have a standard quadratic equation of the type
$$
[x_1,y_1] \ldots [x_k,y_k] = g,
$$
which we will sometimes write as $r_1 \ldots r_k = g$, where, as
before, $r_i = [x_i,y_i].$
\begin{lemma}\label{3.4}
Let $S:[x_1,y_1][x_2,y_2] = g$ be a nondegenerate  equation over a
group $G\in \mathcal G.$ Then $S = g$ has a solution in general
position in some group $H$ which is an iterated extension of
centralizers of $G\ast F$ unless $S = 1$ is the equation
$[x_1,y_1][x_2,y_2] = 1.$ Moreover, for each $i$, $x_i, y_i$
generate a free subgroup.
\end{lemma}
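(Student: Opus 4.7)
The plan is to introduce auxiliary free generators so as to turn the two-commutator equation into a single-commutator equation, and then construct a solution of the latter within an iterated centralizer extension.

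Work in $\tilde G := G * F(a,b)$, a free product which remains in $\mathcal G$ with the same parabolic subgroups as $G$. Set $x_1 = a$ and $y_1 = b$. Then $r_1 = [a,b]$ is non-trivial and the pair $(x_1, y_1)$ freely generates the rank-$2$ free subgroup $F(a,b)$. The equation $[x_1,y_1][x_2,y_2] = g$ reduces to the single-commutator equation
\[
[x_2, y_2] \;=\; [b,a]\, g \;=:\; w,
\]
to be solved in some iterated extension of centralizers of $\tilde G$, with $(x_2, y_2)$ generating a free subgroup of rank $2$.

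The general-position condition is automatic. A direct calculation (using $[a,b]^{-1} = [b,a]$) yields $[r_1, r_2] = g\,[b,a]\,g^{-1}\,[a,b]$, which in $G * F$ is a syllable-reduced word with alternating $G$-syllables $g, g^{-1}$ and $F$-syllables $[b,a], [a,b]$; since $g \neq 1$ in $G$ and $[a,b] \neq 1$ in $F$, this word is non-trivial in the free product. Hence any solution of $[x_2, y_2] = w$ automatically gives a solution of the original equation in general position.

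The core of the argument is the explicit construction of $(x_2, y_2)$. Observe first that $w = [b,a]\,g$ is hyperbolic in $\tilde G$: it is cyclically reduced of syllable length $2$ in the free-product structure $G * F$, hence not conjugate into any parabolic of $\tilde G$ (all of which sit inside the factor $G$). By Lemma \ref{Eg}, $C_{\tilde G}(w) = \langle w_0\rangle$ is cyclic, generated by a primitive root $w_0$ of $w$. Following the strategy of \cite[Prop.~4]{KMNull} for free coefficient groups, build an iterated extension of centralizers of $\tilde G$ by first adjoining a stable letter $t$ with $[t, w_0] = 1$ and, if necessary, further stable letters commuting with centralizers of auxiliary hyperbolic elements; within this extension $H$, exhibit $x_2, y_2$ as explicit words in $a, b, g$ and the new letters whose commutator telescopes to~$w$.

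The main obstacle is to carry out this construction so that $\langle x_2, y_2\rangle$ is \emph{simultaneously} free of rank $2$. Freeness is verified by passing to the amalgamated-product normal form of the centralizer extension and invoking the big powers condition (Proposition \ref{bigpowers}): sufficiently large powers of $w_0$ interacting with elements of $\tilde G$ outside $C_{\tilde G}(w_0)$ produce unique normal forms that preclude any non-trivial relation among $x_2, y_2$. The delicate balance between forcing the identity $[x_2, y_2] = w$ to hold exactly and keeping $\langle x_2, y_2\rangle$ free is the principal technical difficulty, and it dictates the precise sequence of centralizer extensions to be performed.
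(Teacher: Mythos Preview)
Your reduction has a fatal obstruction. By fixing $x_1 = a$, $y_1 = b$ to be free generators, you are left with the single-commutator equation $[x_2, y_2] = [b,a]\,g$ over $\tilde G = G * F(a,b)$, and you assert that it can be solved in some iterated centralizer extension $H$ of $\tilde G$. But every such $H$ admits a $G$-retraction onto $G$: first collapse each stable letter to $1$ to retract $H \to \tilde G$, then send $a,b \mapsto 1$ to retract $\tilde G \to G$. This retraction carries $[b,a]\,g$ to $g$, so a solution $[p,q] = [b,a]\,g$ in $H$ would yield $[\bar p,\bar q] = g$ in $G$, forcing $g$ to be a \emph{single} commutator in $G$. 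The hypothesis, however, only tells you that $g$ is a product of two commutators in $G$ --- this is exactly what nondegeneracy of the original equation gives. Elements of commutator length~$2$ occur in groups from $\mathcal G$: already when $G$ is free on $c,d,e,f$, the element $g = [c,d][e,f]$ is not a commutator. For such $g$ your residual equation is unsolvable in every centralizer extension of $\tilde G$, and the ``explicit words whose commutator telescopes to $w$'' that you promise cannot exist. (In fact, if you literally restrict $x_2,y_2$ to words in $a,b,g$ and stable letters as you wrote, the retraction sends them into $\langle g\rangle$, so their commutator maps to $1\neq g$ --- an even quicker contradiction.)

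The paper's proof works precisely because it never discards the given solution in $G$. It starts from a solution $(a,b,c,d) \in G^4$ of the original equation and perturbs it: either by an explicit substitution involving one new free letter when the given solution is partially degenerate, or, in the fully commutative case, by passing to a tower of five centralizer extensions with stable letters $t_1,\dots,t_5$ and setting $x_1 \mapsto t_5^{-1}t_1 a$, $y_1 \mapsto (t_2 b)^{t_5}$, $x_2 \mapsto (t_3 c)^{t_5}$, $y_2 \mapsto t_5^{-1}t_4 d$. Both commutators are deformed simultaneously, so the relation $[x_1,y_1][x_2,y_2] = g$ is preserved throughout without ever requiring $g$, or any translate of it, to be a single commutator.
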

\begin{proof} Suppose $S = g$ has   a solution $\phi$ such that $r_1^\phi =
1$ and $r_2^\phi = 1.$ Then $g = 1$ and our equation takes the form
\begin{equation} \label{g2} [x,y][x_2,y_2] = 1.
\end{equation}
 From now on we  assume that for
all solutions $\phi$ either $r_1^\phi \neq 1$ or $r_2^\phi \neq 1.$

Suppose now that just one of the equalities  $r_i^\phi = 1$ ($i =
1,2$) takes place, say $r_1^\phi = 1.$ Write $x_2^\phi = a$, and
$y_2^\phi = b.$ Then the equation is in  the form
$$
[x,y][x_2,y_2] = [a,b] \neq 1.
$$
This equation  has other solutions, for example, for a new letter
$c$ and $p>2$,
\begin{equation} \label{specific} \psi : \ \ x \rightarrow (ca^{-1})^{-p}c, \
y \rightarrow c^{(ca^{-1})^p}, \ x_2 \rightarrow a^{(ca^{-1})^p}, \
y_2 \rightarrow (ca^{-1})^{-p}b \end{equation} for which
$$
r_1^\psi = [c,(ca^{-1})^p] \neq 1 \ \  and \ \  r_2^\psi =
[(ca^{-1})^p,a][a,b] \neq 1.
$$
We claim, that we have $[r_1^\psi,r_2^\psi]
 \neq 1.$ Indeed, $[r_1^\psi,r_2^\psi] = 1$ if and only if $[[c,(ca^{-1})^p], [(ca^{-1})^p,a][a,b]] = 1$,
but this is not true in $G\ast \langle c\rangle.$

Thus, just one case is left to consider. Suppose that
$[r_1^\phi,r_2^\phi] = 1$ and  $r_i^\phi \neq 1$  ($i = 1,2$) for
all solutions $\phi.$  Suppose $x^{\phi}=a,y^{\phi}=b, x_2^{\phi}=c$
and $y_2^{\phi}=d.$ We will use ideas from \cite{Imp} to change the
solution. Let $$H=\langle G,t_1,t_2,t_3,t_4,t_5|
1=[t_1,b]=[t_2,t_1a]=[t_3,d]=[t_4,t_3c]=[t_5,t_2bc^{-1}t_3^{-1}]\rangle
.$$

Let $x^{\psi}=t_5^{-1}t_1a,\ y^{\psi}=(t_2b)^{t_5},\ x_2=(t_3c)^{t_5},\ y_2^{\psi}=t_5^{-1}t_4d.$

This $\psi$ is also a solution of the same equation. But now
$x^{\psi}$ and $y^{\psi}$ generate a free subgroup of $H$. If we
have a word $w(x,y)$ then $w(x^{\psi}, y^{\psi})=1$ in $H$ if all
occurrences of $t_5$ disappear. This can only happen if $w(x,y)$ is
made from the blocks $x^{-1}yx$. But these blocks commute, hence
$w=x^{-1}y^nx$. But now $w^{\psi}=a^{-1}t_1^{-1}(t_2b)^nt_1a,$
therefore $w^{\psi}$ contains $t_2$ that does not disappear.
Therefore $w^{\psi}\neq 1$. Similarly, $x_2^{\psi}$ and $y_2^{\psi}$
generate a free subgroup of $H$.

We will show now that $[r_1^\psi,r_2^\psi]
\not = 1$ . Indeed, $$r_1^\psi r_2^\psi =[x^{\psi},y^{\psi}][x_2^{\psi},y_2^{\psi}]=[a,b][c,d],$$ but
$$r_2^\psi r_1^\psi =[x_2^{\psi},y_2^{\psi}][x^{\psi},y^{\psi}]=t_5^{-1}c^{-1}t_3^{-1}t_5d^{-1}t_3cda^{-1}t_1^{-1}b^{-1}t_2^{-1}t_1at_5^{-1}t_2bt_5.$$
And there is no way to make a pinch and cancel $t_5$ in the second expression.
Therefore $[r_1^\psi,r_2^\psi]
\not = 1$ and the proposition is proved.
\end{proof} Similarly,  one can
prove the following lemma.
\begin{lemma} \label{lemma3.5}(compare \cite{KMNull}, Lemma 13])
Let  $S: [x_1,y_1] \ldots [x_k,y_k] = g$ be a nondegenerate equation
over group $G\in\mathcal G$ and assume that
 $k \geq 3.$ Then $S = g$ has a solution in general position over some group $H$ which is an iterated extension of centralizers of $G\ast F$.
 Moreover, for each $i$, $x_i, y_i$ generate a free subgroup.
\end{lemma}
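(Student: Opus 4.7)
The natural approach is to generalize the construction from the $k=2$ case (Lemma \ref{3.4}) by building an iterated centralizer extension equipped with ``local'' stable letters for each commutator and ``bridging'' stable letters between consecutive commutators. Starting from any solution $\phi$ of $S = g$ with $x_i^\phi = a_i$, $y_i^\phi = b_i$, one introduces for each $i \in \{1, \ldots, k\}$ a pair of letters $t_{2i-1}, t_{2i}$ extending the centralizers $C(b_i)$ and $C(t_{2i-1}a_i)$ in the current stage of the tower, and for each $i \in \{1, \ldots, k-1\}$ a bridging letter $s_i$ extending the centralizer of an element of the form $t_{2i}b_ia_{i+1}^{-1}t_{2i+1}^{-1}$. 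This yields an iterated extension of centralizers $H$ of $G \ast F$, where $F$ is free on these new letters; each extension is legal because groups in $\mathcal G$ are CSA and satisfy the big powers condition of Proposition \ref{bigpowers}, so the elements whose centralizers are extended generate maximal cyclic subgroups at every stage.

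Next I define $\psi$ by analogy with Lemma \ref{3.4}: for $1 < i < k$ set $x_i^\psi = s_{i-1}^{-1}t_{2i-1}a_i$ and $y_i^\psi = (t_{2i}b_i)^{s_i}$, with the obvious modifications at the boundary indices $i=1$ and $i=k$ (using only one bridging letter there). A direct computation shows that $[x_i^\psi, y_i^\psi]$ equals a conjugate of $[a_i, b_i]$ by a product of bridging letters, and these bridging letters telescope in the full product $\prod_i [x_i^\psi, y_i^\psi]$, so that $\psi$ is again a solution of $S = g$ in $H$.

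The two remaining claims follow the template of Lemma \ref{3.4}. For the free-subgroup condition, any nontrivial word $w(x_i^\psi, y_i^\psi)$ that vanishes in $H$ would require all occurrences of $t_{2i}$ to pinch under Britton's lemma applied to the HNN tower; the defining relations force such pinches to occur only when $w$ is conjugate into the cyclic subgroup generated by $x_i^{-1}y_ix_i$, and the image of the latter still contains an uncancellable occurrence of $t_{2i}$. For general position, one expands $r_i^\psi r_{i+1}^\psi$ and $r_{i+1}^\psi r_i^\psi$ in normal form in the HNN tower: in the second expression the bridging letter $s_i$ appears in a pattern $s_i \cdot w \cdot s_i^{-1}$ where $w$ does not commute with the element $t_{2i}b_ia_{i+1}^{-1}t_{2i+1}^{-1}$, so the pinch is obstructed by the defining relation of $s_i$, giving $[r_i^\psi, r_{i+1}^\psi] \neq 1$. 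The main technical obstacle is precisely this last verification: one must confirm that, at the stage of the tower where $s_i$ is introduced, the element $t_{2i}b_ia_{i+1}^{-1}t_{2i+1}^{-1}$ is not a proper power and generates a maximal abelian (hence malnormal, by CSA) subgroup, so that the obstruction to pinching holds simultaneously for all $i$.
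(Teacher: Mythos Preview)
Your approach differs from the paper's. The paper argues by induction on $k$: it treats $k=3$ by two successive applications of Lemma~\ref{3.4} (once to the last two commutators, once to the first two), and for $k>3$ it picks a solution with $r_k^{\phi}=[a,b]\neq 1$, applies the induction hypothesis to $r_1\cdots r_{k-1}=g[b,a]$, and then grafts the five--letter construction of Lemma~\ref{3.4} onto the $(k-1)$st and $k$th commutators only, leaving the first $k-2$ images untouched. So the paper never builds a single tower with $3k-1$ stable letters; it repeatedly reuses the already--verified $k=2$ machinery, which means almost nothing new has to be checked at each step.

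Your direct construction is in the right spirit and, once the boundary formulas are written out and a \emph{nondegenerate} starting solution is fixed, it should go through by the same Britton--lemma analysis as in Lemma~\ref{3.4}. But as written there is a gap: you begin from ``any solution $\phi$'', and the equation being nondegenerate does not prevent some $a_i,b_i$ from being trivial or commuting. If, say, $b_i=1$, then ``extend the centralizer of $b_i$'' is vacuous, $t_{2i-1}$ is a free letter, $t_{2i}$ commutes with $t_{2i-1}a_i$, and the pinching analysis for the free--subgroup claim and for $[r_i^\psi,r_{i+1}^\psi]\neq 1$ no longer follows the template of Lemma~\ref{3.4}. The paper avoids this by arranging at every stage that the two commutators being modified are already nontrivial (via the induction hypothesis or an explicit choice in $F$). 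If you want to keep the direct route, you should first secure a solution with all $[a_i,b_i]\neq 1$---for instance by the same case split the paper uses for $k=3$, or by invoking Proposition~\ref{prop1} once---and then run your tower; after that your sketch of the Britton argument is essentially what is needed.
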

\begin{proof} The proof will follow by induction on $k$.

Let $k=3$. Assume that $g=1.$ This means we have the equation
$$[x_1,y_1][x_2,y_2][x_3,y_3]=1,$$ which has a solution
$$ x_1^{\phi}=a,\ y_1^{\phi}=b,\ x_2^{\phi}=b,\ y_2^{\phi}=a,\ x_3^{\phi}=1,\ y_3^{\phi}=1,$$
where $a,b$ are arbitrary generators  of $F$. Then the lemma follows
from Proposition 4 \cite{KMNull}. But for convenience of the reader
we will give a proof here. The equation $$[x_2,y_2][x_3,y_3]=[b,a]$$
is nondegenerate of atomic rank 2; hence, by the lemma above, it has
a solution $\theta$ such that $[r_2^{\theta},r_3^{\theta}]\neq 1$,
and the images $x_2^{\theta}, y_2^{\theta}$ (the images
$x_3^{\theta}, y_3^{\theta}$) generate a free non-abelian subgroup.
We got a solution $\psi$, such that
$$x_1^{\psi}=a, y_1^{\psi}=b, x_i^{\psi}=x_i^{\theta},  y_i^{\psi}=y_i^{\theta}, {\rm for}\ i=2,3.$$ Now we are in a position to apply the previous lemma to the equation
$$[x_1,y_1][x_2,y_2]=[y_3^{\psi},x_3^{\psi}].$$
It follows that there exists a solution to $S=g$ in general position
and such that the subgroups generated by the images of $x_i,y_i$ are
free non-abelian for $i=1,2,3$.

Assume now that $g\neq 1$. Then there exists a solution $\phi$ such that for at least one $i$ we have $r_i^{\phi}\neq 1$.
Renaming variables one can assume that exactly $r_3^{\phi}=[a,b]\neq 1, \ a,b\in G
$. Then the equation
$$r_1r_2=g[b,a]$$ has a solution in $G$. Again, we have two cases. If $g[b,a]\neq 1$, then we can argue as in Lemma \ref{3.4}.
We obtain first a solution $\phi$ such that $x_i^{\phi}=c_i,
y_i^{\phi}=d_i, i=1,2,$ $x_3^{\phi}=a, y_3^{\phi}=b$,
$[r_1^{\phi},r_2^{\phi}]\neq 1,$ $[c_1,d_1]\neq g,$ and $c_i,d_i$
generate a free subgroup for $i=1,2.$. Then we consider the equation
$[x_2,y_2][x_3,y_3]=[d_1,c_1]g$ and apply Lemma \ref{3.4} once more.

 If $g[b,a]=1$ then $g=[a,b]$ and the initial equation $S=g$ actually has the form $$r_1r_2r_3=[a,b].$$
In this event consider a solution $\theta$ such that
$$x_1^{\theta}=c, y_1^{\theta}=d, x_2^{\theta}=(ca^{-1})^{-1}d,
y_2^{\theta}=c^{(ca^{-1})}, x_3^{\theta}=a^{(ca^{-1})},
y_3^{\theta}=(ca^{-1})^{-1}b,$$ where $c,d$ are non-commuting
elements from $F$. Then $[r_i^{\theta},r_j^{\theta}]\neq 1,
i,j=1,2,3$, and, obviously, $x_i^{\theta},y_i^{\theta}$ generate a
free group.

Let $k>3$. The equation $$r_1\ldots r_k=g$$
has a solution $\phi$ such that at least for one $i$, say $i=k$ (by renaming variables we can always assume this),
we have $r_k^{\phi}=[a,b]\neq 1.$ Then the equation $$r_1\ldots r_{k-1}=g[b,a]$$ is nondegenerate and by induction
there is a solution $\theta$ such that $[r_i^{\theta},r_{i+1}^{\theta}]\neq 1$ for all $i=1,\ldots ,k-2,$ and $x_i,y_i$ generate a free subgroup for $i=1,\ldots ,k-1.$ Define now a solution $\theta _1$ of the initial equation $S=g$ as follows $$
x_i^{\theta}=x_i^{\theta _1}, y_i^{\theta}=y_i^{\theta _1}, {\rm for} \ i=1,\ldots ,k-2,$$

$$x_{k-1}^{\theta _1}=t_5^{-1}t_1x_{k-1}^{\theta}, y_{k-1}^{\theta _1}=(t_2y_{k-1}^{\theta})^{t_5}, x_k^{\theta _1}=(t_3a)^{t_5}, y_k^{\theta _1}=t_5^{-1}t_4b,$$ where $$[t_1,y_{k-1}^{\theta}]=[t_2, t_1x_{k-1}^{\theta}]=
[t_3,b]=[t_4,t_3a]=[t_5,t_2y_{k-1}^{\theta}a^{-1}t_3^{-1}]=1.$$ This
solution satisfies the requirements of the lemma.

\end{proof}

Thus, Proposition \ref{prop5} is proved for the case $n=0$. Consider
now the case $n>0$.
\begin{lemma} (compare \cite{KMNull}, Lemma 14])
\label{c2} The equation $S: [x,y]c^z = g,$ where $g \neq 1$ which is consistent over a  group
$G\in\mathcal G$ always has a solution in general position in some iterated centralizer extension $H$ of $G$
such that the images of $x$ and $y$ generate a free subgroup.

\end{lemma}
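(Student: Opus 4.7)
The plan is to follow the template of Lemma~\ref{3.4}, adapted to the two atoms $r_1=[x,y]$ and $r_2=c^z$ of this equation. Since $S=g$ is consistent over $G$, I begin with an arbitrary solution $(x_0,y_0,z_0)\in G^3$, so $[x_0,y_0]\cdot c^{z_0}=g\neq 1$. Because the equation has atomic rank $2$, general position reduces to the single non-commutation $[[x,y]^\psi,\,(c^z)^\psi]\neq 1$.

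I would then build an iterated extension of centralizers $H$ of $G$ using five stable letters $t_1,\ldots,t_5$ chosen so that $t_1$ commutes with $y_0$, $t_2$ with $t_1 x_0$, $t_3$ with $c$, $t_4$ with $t_3 z_0$, and $t_5$ with the interface element $t_2 y_0\cdot z_0^{-1}t_3^{-1}$ that bridges the two atoms (the exact interface element is dictated by the pinch computation below). The new solution is set to be
\[
x^\psi=t_5^{-1}t_1 x_0,\qquad y^\psi=(t_2 y_0)^{t_5},\qquad z^\psi=t_5^{-1}t_4 z_0,
\]
with the shape of these substitutions directly analogous to the one used in the proof of Lemma~\ref{3.4}; each stable letter $t_i$, $i\leq 4$, plays the role of freeing one factor of an atom, while $t_5$ interleaves the two atoms.

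Verification then has three parts, each tracking an argument already executed in Lemma~\ref{3.4}. First, $[x^\psi,y^\psi]c^{z^\psi}=g$ by a direct pinch computation in which each relation $[t_i,\cdot]=1$ is used exactly once to telescope the inserted stable letters and return to $[x_0,y_0]c^{z_0}$. Second, $\langle x^\psi, y^\psi\rangle$ is free: any nontrivial relator $w(x^\psi,y^\psi)=1$ in $H$ would have to annihilate all occurrences of $t_5$, which as in Lemma~\ref{3.4} forces $w(x,y)$ to be assembled from the commuting blocks $x^{-1}y^n x$, hence $w=x^{-1}y^n x$; but then $w^\psi$ contains an uncancelable $t_2$, a contradiction. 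Third, for general position, computing $r_1^\psi r_2^\psi=g$ is routine, while computing $r_2^\psi r_1^\psi$ produces an alternating word in which $t_5$ and $t_5^{-1}$ occur in positions admitting no pinch in the HNN-type normal form of $H$, so $[r_1^\psi,r_2^\psi]\neq 1$.

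The main obstacle is twofold. First, each of the five centralizer extensions must be legitimate: the element whose centralizer is extended must be nontrivial and not a proper power in the preceding stage. In a CSA group from $\mathcal G$ these issues are resolved by passing to roots and using malnormality of maximal abelian subgroups exactly as in Lemmas~\ref{3.4} and~\ref{lemma3.5}. Second, one must ensure that $c^{z_0}$ and $g$ are not forced to lie in a common maximal abelian subgroup by the construction; if the given initial solution happens to satisfy $[g,c^{z_0}]=1$, I would first replace $z_0$ by $z_0 s$, where $s$ is a stable letter obtained by a preliminary extension of $C_G(c)$ chosen so that $c^{z_0 s}$ is driven out of the maximal abelian subgroup containing $g$. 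The CSA property of $G\in\mathcal G$ guarantees that such a preliminary modification exists whenever $g\neq 1$, and the rest of the construction then proceeds as above.
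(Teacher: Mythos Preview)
Your plan transplants the five–stable-letter template of Lemma~\ref{3.4} verbatim, but the second atom here is $r_2=c^z$, not a commutator $[x_2,y_2]$: it has only the single variable $z$ and a fixed constant $c$. The letters $t_3,t_4$ in Lemma~\ref{3.4} were there precisely to free up the pair $(x_2,y_2)$; with $c$ fixed there is nothing for them to do, and your proposed $z^\psi=t_5^{-1}t_4z_0$ together with $[t_3,c]=[t_4,t_3z_0]=[t_5,t_2y_0z_0^{-1}t_3^{-1}]=1$ does \emph{not} telescope. Concretely, $(c^z)^\psi=z_0^{-1}t_4^{-1}t_5\,c\,t_5^{-1}t_4z_0$ contains no $t_3$, so neither the $t_3$- nor the $t_4$-relation can be applied, and when you multiply by $[x^\psi,y^\psi]=x_0^{-1}y_0^{-1}x_0\,t_2^{-1}t_5^{-1}t_2y_0t_5$ the four occurrences of $t_5^{\pm1}$ do not pinch away. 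So $\psi$ is not a solution of $S=g$ as written.

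The paper's argument is both simpler and structurally different. First it reduces, \emph{inside $G$ with no extensions}, to a solution with $[a,b]\neq 1$: if $[a,b]=1$ one rewrites $c^d=[d,c^{-1}]c$ (when $[c,d]\neq1$) or uses non-triviality of the center of $G$ (when $[c,d]=1$) to find a new solution with $r_1\neq1$. Only then does it perform a \emph{single} centralizer extension $H=\langle G,t\mid [t,bc^d]=1\rangle$ and set $x^\psi=t^{-1}a$, $y^\psi=t^{-1}bt$, $z^\psi=dt$. The verification that $\psi$ solves $S=g$ and that $[r_1^\psi,r_2^\psi]\neq1$ is then a short pinch computation using just the one relation $[t,bc^d]=1$. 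Your preliminary reduction to $[x_0,y_0]\neq1$ is only gestured at (``passing to roots''), whereas it is the substantive first half of the paper's proof; and your final worry about $[g,c^{z_0}]=1$ is not the real obstruction---the construction is designed to force non-commutation regardless.

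If you want to salvage your approach, drop $t_3,t_4$ entirely, take $z^\psi=z_0t_5$, and set the interface relation to $[t_5,\,t_2y_0c^{z_0}]=1$; then the telescoping goes through. But at that point you are essentially reproducing the paper's one-letter argument with two redundant letters $t_1,t_2$ attached.
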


\begin{proof} Let $x \rightarrow a, \ y \rightarrow b, \ z \rightarrow d$ be
an arbitrary solution of $[x,y]c^z = g,$ where $g \neq 1.$ Then $g =
[a,b]c^d$ and the equation takes the form
$$
[x,y]c^z = [a,b]c^d.
$$
We can assume that $[a,b] \neq 1.$ Indeed, suppose $[a,b] = 1$. If
$[c,d] \neq 1$, then we can write the equation as
$$
[x,y]c^z = c^d = [d,c^{-1}]c
$$
which  has the  solution $x \rightarrow d, \ \ y \rightarrow c^{-1},
\ \  z \rightarrow 1$ such that $[x,y] \rightarrow [d,c^{-1}] \neq
1.$ So we can assume now that $[c,d] = 1,$ in which case we have the
equation
$$
[x,y]c^z = c \  \ \ or \ \  equivalently \ \ \ [x,y] = [c^{-1},z].
$$
The group $G$ is  a nonabelian CSA-group; hence the center of $G$ is
trivial. In particular, there exists an element $h \in G$ such that
$[c,h] \neq 1$. We see that $x \rightarrow c^{-1}, \  y \rightarrow
h, \  z \rightarrow h$ is a solution $\phi$ for which $[x,y]^\phi
\neq 1.$

Thus we have the equation $[x,y]c^z = [a,b]c^d,$ where $[a,b] \neq
1.$ Let $H=\langle G,t| [t,bc^d]=1\rangle$. Consider the map $\psi$ defined as follows:
$$
x^\psi = t^{-1}a, \ \ \ y^\psi = t^{-1}bt, \ \
z^\psi = dt.
$$
Straightforward computations show  that
$$
[x,y]^\psi = [a,b][b,t], \ \ and \ \ (c^z)^\psi = c^{dt};
$$
hence
$$
[x^\psi,y^\psi]c^{z^\psi}  = [a,b]c^d
$$
and consequently, $\psi$ is a solution.

We claim that $[r_1^\psi,r_2^\psi] \neq  1.$ Indeed, suppose
$[r_1^\psi,r_2^\psi] = 1;$ then  we have
$$
[[x,y]^\psi,c^{z^\psi}] = 1, \ [[a,b][b,t],c^{dt}] = 1, \
t^{-1}b^{-1}tb[b,a]t^{-1}d^{-1}c^{-1}dt[a,b]b^{-1}t^{-1}bd^{-1}cdt=1$$
which implies
$$t^{-1}b^{-1}tb[b,a]t^{-1}d^{-1}c^{-1}dt[a,b]b^{-1}bd^{-1}cd=1.$$

The letter $t$ disappears only if $c^d$ commutes with $b$ or $b^a$
commutes with $bc^d$. In both cases the last equality implies that
$[a,b]$ commutes with $c^d$  and $b$ commutes with $b^a$. Therefore
$[a,b]=1$ which contradicts to the choice of $a,b,c,d$.

 $\Box$

Now suppose that $m=1, n > 1$.  Let $\phi: G_S \longrightarrow G$ be
an arbitrary solution of $S = g$. Write
$$
h = g(\prod_{j = 3}^{n}c_j^{z_j})^{- \phi}
$$
and consider the equation \begin{equation} \label{h}
[x,y]c_1^{z_1}c_2^{z_2} = h. \end{equation} If this equation
satisfies the conclusion of the proposition ~\ref{prop5}, then by
induction the equation $S = g$ will satisfy the conclusion. So we
need to prove the proposition just for the equation (\ref{h}). There
are now  two possible cases.

Case a) There exists a solution $\xi$ of the equation (\ref{h}) such
that $(c_2^{z_2})^\xi \neq h.$ In this event by Lemma ~\ref{c2} the
equation
$$
[x,y]c_1^{z_1} = h(c_2^{z_2})^{-\xi } \neq 1
$$
has a solution $\theta$ in general position. Hence we can extend
this $\theta$ to a solution of (\ref{h})  in such a way that
$r_i^\theta \neq 1$ for $i = 1, 2$ and $[r_1^\theta,r_2^\theta] \neq
1$. Consequently, by Proposition ~\ref{prop1} we can construct a
solution $\psi$ in general position. It will automatically satisfy
the conclusion of Proposition \ref{prop5}.

Case (b) Assume now, that $(c_2^{z_2})^{\phi} = h$ for all solutions
$\phi$ of the equation (\ref{h}). Then we actually have
$$
[x,y]c_1^{z_1} = 1, \ \ and \ \  c_2^{z_2} = h,
$$
and this system of equations has  a solution in $G.$ It follows
that $c_1 = [a,b] \neq 1$ for some $a,b \in G$. Therefore the
equation (~\ref{h}) is in the form
$$
[x,y][a,b]^{z_1}c_2^{z_2} = h,
$$
and has a solution $\psi$ of the type
$$
x^\psi = b^f, \ y^\psi = a^f, \ z_1^\psi = f, \ z_2^\psi = z_2^\phi
$$
where  $f$ is an arbitrary element in $G$ and $\phi$ is an arbitrary
solution of (~\ref{h}).  The two elements $[a,b]$ and $h$ are
nontrivial in the CSA-group $G$
 hence  there exists an element $f^\ast \in G$ such that $[[a,b]^{f^\ast},h] \neq 1.$
But this implies that if we take $f = f^{\ast}$ then the solution
$\psi$ will have the property $[r_2^\psi,r_3^\psi] \neq 1.$ Now it
is sufficient to apply Proposition ~\ref{prop1}.

Now we suppose that $m = 2, n>1.$ In this event we have the equation
$$
[x_1,y_1][x_2,y_2] \prod_{j = 1}^{j = n} c_j^{z_j} = g.
$$
Again, if there exists  a solution $\phi$ of this equation such that
$$
(\prod_{j = 1}^{j = n} c_j^{z_j})^\phi  \neq  g,
$$
then we can write
$$
h = g(\prod_{j = 1}^{j = n} c_j^{z_j})^{-\phi},
$$
and consider the equation
$$
[x_1,y_1][x_2,y_2] = h
$$
which according to Lemma \ref{lemma3.5} has a solution $\xi$ in
general position such that the images of $x_i,y_i$ generate a free
subgroup. We can extend it to a solution of $S = g$ and by
Proposition \ref{prop5} applied to the equation
$$
[x_1^{\xi},y_1^{\xi}][x_2,y_2] \prod_{j = 1}^{j = n} c_j^{z_j} = g.
$$
we can construct a solution $\psi$ in general position with the
required properties.

Let assume now that
$$
(\prod_{j = 1}^{j = n} c_j^{z_j})^\phi  =  g
$$
for all solutions $\phi$ of the equation $S = g.$ This implies that
an arbitrary  map of the type
$$
x_1 \rightarrow a, \ y_1 \rightarrow b, \  x_2 \rightarrow b, \ y_2
\rightarrow a
$$
extends by means of any $\phi$ above to a solution $\psi$ of the
equation $S = g.$ Choose $a,b \in F$ then $[[b,a],r_3^\phi] \neq 1$
for the given solution $\phi.$  And we again just need to appeal to
Proposition \ref{prop5} for the equation
$$
[a,b][x_2,y_2] \prod_{j = 1}^{j = n} c_j^{z_j} = g.
$$

The case $m > 2$ is easy since if $\phi$ is a solution of the
equation $$\prod_{i = 1}^{i = m}[x_i,y_i] \prod_{j = 1}^{j =
n}c_j^{z_j}g^{-1}=1,$$  then we can consider the equation
$$
\prod_{i = 1}^{i = m} [x_i,y_i] = g(\prod_{j = 1}^{j = n}
c_j^{z_j})^{-\phi}
$$
which by Lemma \ref{lemma3.5} has a solution in general position
such that the images of $x_i,y_i$ generate a free subgroup; after
that to finish the proof we need only apply Proposition \ref{prop1}.

Proposition \ref{prop5} is proved.
\end{proof}

The following proposition settles genus 0 case.
\begin{prop}\label{prop6}
Let $S: c_1^{z_1} \ldots c_k^{z_k} = g$ be a nondegenerate standard
quadratic equation over a group $G\in\mathcal G$. Then either $S =
g$ has a solution in general position in some iterated centralizer
extension of $G\ast F$ or every solution of $S = g$ is commutative.
\end{prop}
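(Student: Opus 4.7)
The proposition is essentially a direct corollary of Proposition~\ref{prop1}, and in contrast to Proposition~\ref{prop5} it requires no case analysis or actual centralizer extension.

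First I would record the observation that, for a genus-$0$ equation $S: c_1^{z_1}\cdots c_k^{z_k}=g$, every atom $r_i=c_i^{z_i}$ is a conjugate of the nontrivial constant $c_i$ (nontrivial by the definition of a standard quadratic equation), so $r_i^\phi \neq 1$ under any homomorphism $\phi$. Thus every solution of $S=g$ is automatically nondegenerate, and the conditions ``every nondegenerate solution is commutative'' and ``every solution is commutative'' coincide in the genus-$0$ setting.

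Next I would invoke Proposition~\ref{prop1}. The group $G\in\mathcal G$ is CSA, as recorded in Section~1.5, and $S=g$, rewritten as $c_1^{z_1}\cdots c_k^{z_k}g^{-1}=1$, is a standard quadratic equation of type \eqref{eq:st2} with $n=0$ and $d=g^{-1}$; the hypotheses of Proposition~\ref{prop1} are met verbatim. That proposition then delivers the dichotomy: either $S=g$ has a solution in general position, or every nondegenerate solution is commutative. In the first branch, the general-position solution already lies in $G$, hence in $G\ast F$, and trivially in any iterated centralizer extension of $G\ast F$, giving the first alternative of the proposition. In the second branch, combined with the previous step, every solution of $S=g$ is commutative, giving the second alternative.

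There is essentially no obstacle. The proof is short precisely because, unlike the orientable higher-genus case treated in Proposition~\ref{prop5}, there are no quadratic generators $x_i,y_i$ whose images must be arranged to generate free subgroups; consequently the CSA dichotomy of Proposition~\ref{prop1} suffices, and no genuine centralizer extension is invoked. The clause ``in some iterated centralizer extension'' in the statement is retained only for uniformity with Proposition~\ref{prop5}, where the additional free-generation requirement forced such extensions.
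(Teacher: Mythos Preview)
Your proposal is correct and matches the paper's own proof essentially verbatim: the paper observes that $c_i\neq 1$ by the definition of a standard quadratic equation, so every solution is nondegenerate, and then simply invokes Proposition~\ref{prop1}. Your additional remarks about the CSA hypothesis and the ``iterated centralizer extension'' clause being retained only for uniformity are accurate elaborations, but the core argument is identical.
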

{\em Proof.} By the definition of a standard quadratic equation $c_i
\neq 1$ for all $i = 1, \ldots ,k.$ Hence every solution of $S = g$
is a nondegenerate. Now the result follows from Proposition
~\ref{prop1}.

The following proposition can be proved similarly to Proposition 8
in \cite{KMNull}.
\begin{prop}
\label{neor} Let $S: x_1^2\ldots x_p^2c_1^{z_1}\ldots c_k^{z_k}g=1$
be a nondegenerate regular standard quadratic equation over a group
$G\in\mathcal G$. Then there is a solution in general position into
some iterated centralizer extension of $G\ast F$. If $p>2$ and
$p+k>3$, then the equation is regular.
\end{prop}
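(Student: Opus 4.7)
The proof should run closely parallel to Propositions \ref{prop5} and \ref{prop6} for the orientable/genus-zero cases, replacing commutator atoms $[x_i,y_i]$ by squaring atoms $x_i^2$, and using Proposition \ref{prop1} (which provides a general-position solution once one has a solution in which two consecutive atoms do not commute). The plan is to fix a solution $\phi$ of $S=1$, isolate a segment of $S$ involving one or two squaring atoms together with as many conjugation atoms as needed, and replace $\phi$ on that segment by a ``pinched'' solution that lives in an iterated centralizer extension $H$ of $G*F$ and makes two adjacent atoms non-commuting, then invoke Proposition \ref{prop1}.

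First I would do induction on $p$. The base case $p=1$ reduces to analyzing $x_1^2 c_1^{z_1}\cdots c_k^{z_k}=g$. If $k=0$ one has nothing to prove (the equation is not regular). If $k\geq 1$, fix a solution $\phi$; then, provided $(c_1^{z_1}\cdots c_k^{z_k})^\phi\neq g$, apply Proposition \ref{prop6} to $c_1^{z_1}\cdots c_k^{z_k}=g\cdot(x_1^{-2})^\phi$ to get a solution in general position over a centralizer extension of $G*F$, extend by $\phi$ on $x_1$, and apply Proposition \ref{prop1}. The degenerate alternative (where every solution satisfies $(c_1^{z_1}\cdots c_k^{z_k})^\phi=g$) is ruled out by the CSA/big-powers trick used in the analogous step of Proposition \ref{prop5}: one freely chooses $x_1^\phi\in F$ non-commuting with an adjacent atom after performing a pinching via an auxiliary centralizer-extension letter $t$ with $[t,\,\cdot\,]=1$ for an appropriate element.

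For the inductive step ($p\geq 2$), I would single out the initial block $x_1^2 x_2^2$ (or $x_1^2 c_1^{z_1}$ if $k\geq 1$) and reduce to the known cases exactly as in Lemmas \ref{3.4} and \ref{lemma3.5}. Specifically, for $x_1^2 x_2^2=h$ with $h\neq 1$ in $G\ast F$, introduce new letters $t_1,\ldots,t_5$ satisfying explicit centralizer relations (with $b=x_1^\phi$, $a=x_2^\phi$ and $h=a^2$ after absorbing the rest into the tail), set
\[
x_1^\psi=(t_1 a)^{t_5^{-1}},\qquad x_2^\psi=t_5^{-1}t_2 b,
\]
and verify, by a $t_5$-pinching argument identical to the one in Lemma \ref{3.4}, that $x_1^\psi$ and $x_2^\psi$ generate a free subgroup of the iterated centralizer extension $H$ and that the two atoms $(x_1^\psi)^2$ and $(x_2^\psi)^2$ do not commute. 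Splice this into $\phi$ on the remaining atoms and invoke Proposition \ref{prop1}. When $k\geq 1$ and only one squaring atom is available for pinching, use Lemma \ref{c2} in the same role it plays inside Proposition \ref{prop5}.

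The main obstacle, and the only essentially new point compared with the orientable case, is verifying that in each pinched solution two consecutive squaring atoms $(x_i^\psi)^2$ and $(x_{i+1}^\psi)^2$ genuinely fail to commute in $H$: unlike commutators, squares can accidentally commute even when their bases do not, so the $t$-pinch has to be arranged so that after expanding $(x_i^\psi)^2(x_{i+1}^\psi)^2$ versus $(x_{i+1}^\psi)^2(x_i^\psi)^2$ the stable letter $t_5$ cannot be cancelled. This is the analogue of the $t_5$-non-cancellation verification at the end of Lemma \ref{3.4}, carried out in the HNN/amalgam corresponding to the centralizer extension; CSA together with Proposition \ref{bigpowers} guarantees non-triviality of the residue. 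Finally, the regularity clause follows from the definition: $\kappa(S)=p+k+1$, so $p>2$ and $p+k>3$ give $\kappa(S)\geq 4$, and the general-position solution just produced is, in particular, non-commutative, so $S=1$ is regular.
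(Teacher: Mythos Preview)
The paper does not give a proof of this proposition; it simply says the argument is ``similar to Proposition~8 in \cite{KMNull}.'' Your outline follows exactly the template of Propositions~\ref{prop5} and~\ref{prop6} (induction on the number of atoms, isolate a short initial block, replace the solution on that block by a pinched one in an iterated centralizer extension so that two consecutive atoms fail to commute, then invoke Proposition~\ref{prop1}), which is indeed the intended route.

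That said, the proposal has one genuine gap and two minor slips. The gap is in the pinched solution for the block $x_1^2x_2^2=h$: you write $x_1^\psi=(t_1a)^{t_5^{-1}}$, $x_2^\psi=t_5^{-1}t_2b$ but never specify the centralizer relations on $t_1,\dots,t_5$, and with the relations from Lemma~\ref{3.4} these assignments do not satisfy $(x_1^\psi)^2(x_2^\psi)^2=h$. The identities that make Lemma~\ref{3.4} work are specific to commutators (e.g.\ $[x,y]^t=[x^t,y^t]$ together with the pinch $[t_5,t_2bc^{-1}t_3^{-1}]=1$) and do not carry over to squares verbatim; the non-orientable case in \cite{KMNull} uses either a bespoke pinch for squares or the reduction $a^2b^2=(ab)^2[b^{-1},a^{-1}]$ to feed back into the commutator machinery. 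You must actually write down and check the relations here---this is the only genuinely new computation, and you have not done it.

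The minor slips: (i) your formula $\kappa(S)=p+k+1$ holds only for type~(\ref{eq:st4}); for type~(\ref{eq:st3}) one has $\kappa(S)=p$. The conclusion $\kappa(S)\ge4$ still follows from $p>2$, $p+k>3$ in both cases, but the argument should split. (ii) For the regularity clause you cite ``the general-position solution just produced,'' which was obtained under the \emph{hypothesis} that $S$ is regular; you should note that your construction actually uses only nondegeneracy, so no circularity occurs. (iii) In the $p=1$ base case, Proposition~\ref{prop6} only gives a dichotomy; you must first exhibit a non-commutative solution of the tail equation before you can take the general-position branch.
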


We introduce now some notation.
 For  $S: \prod_{i = 1}^{i =
m}[x_i,y_i] \prod_{j = 1}^{j = n}c_j^{z_j} = g,$ denote  $p_j=
c_j^{z_j}$, $ p_{n+1} =g^{-1},$ $q_k=\prod_{i = 1}^{i = k}[x_i,y_i]$
for $k\leq m$ and $q_{m+k}= \prod_{i = 1}^{i = m}[x_i,y_i] \prod_{j
= 1}^{j = k}p_k.$

For $S: \prod_{i = 1}^{i = m}x_i^2 \prod_{j = 1}^{j = n}c_j^{z_j} =
g,$ denote  $p_j= c_j^{z_j}$, $ p_{n+1} =g^{-1},$ $q_k=\prod_{i =
1}^{i = k}x_i^2$ for $k\leq m$ and $q_{m+k}= \prod_{i = 1}^{i =
m}x_i^2 \prod_{j = 1}^{j = k}p_k.$

\begin{prop}\label{propmain}
Let $S=g$ be a regular quadratic equation over a group $G\in\mathcal
G$. Then there exists a solution $\delta$ into  $G\ast F$ such that
for any $j=1,\ldots ,m+n-1$
\begin{enumerate}
\item $[q_j^{\delta},r_{j+1}^{\delta}]\neq 1;$
\item $[q_{j}^{\delta},(r_{j+1}\ldots r_{n+m})^{\delta}]\neq 1;$
\item There exists a solution $\delta$ into an iterated centralizer extension of $G\ast F$ such
that the following subgroups are free non-abelian: $\langle
q_j^{\delta},r_{j+1}^{\delta}\rangle $ for any $j=1,\ldots ,m+n-1;$
$\langle q_j^{\delta},x_{j+1}^{\delta}\rangle $ for any $j=1,\ldots
,m-1;$ $\langle q_{j+1}^{\delta},x_{j+1}^{\delta}\rangle $ for any
$j=1,\ldots ,m-1.$
\end{enumerate}\end{prop}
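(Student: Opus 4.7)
The plan is to construct the solution by induction on the atomic rank $k = m+n$, using Propositions \ref{prop5}, \ref{prop6} and \ref{neor} to produce a ``general-position'' seed solution and then upgrading it via the Imp-style substitutions already introduced in Lemmas \ref{3.4}, \ref{lemma3.5} and \ref{c2} to enforce the stronger non-commutativity conditions. For small atomic rank the statement is either vacuous or handled directly by the explicit formulas in those lemmas; for the inductive step the orientable, genus-zero and non-orientable cases are handled in parallel.

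For the inductive step, I would apply the appropriate one of Propositions \ref{prop5}, \ref{prop6}, \ref{neor} to produce a solution $\delta_0$ into an iterated centralizer extension $H$ of $G \ast F$ that is in general position, so $[r_j^{\delta_0}, r_{j+1}^{\delta_0}] \neq 1$ for every $j$, and, in the orientable case, so that $\langle x_i^{\delta_0}, y_i^{\delta_0}\rangle$ is free for each $i$. Conditions 1 and 2 are then secured simultaneously by a cut-and-insert at each index $j$: extend the centralizer of a suitable element straddling the cut by a fresh stable letter $t_{(j)}$, and conjugate the atoms on one side of the cut by $t_{(j)}$. Because $H$ is CSA and $t_{(j)}$ is new, the resulting word for $q_j^{\delta}$ carries an occurrence of $t_{(j)}$ that neither $r_{j+1}^{\delta}$ nor the full suffix $(r_{j+1}\cdots r_{n+m})^{\delta}$ can absorb, so both $[q_j^{\delta}, r_{j+1}^{\delta}]$ and $[q_j^{\delta}, (r_{j+1}\cdots r_{n+m})^{\delta}]$ are forced to be nontrivial by the same ``incancelable $t_5$'' argument that ends the proof of Lemma \ref{3.4}. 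Running the substitutions independently at every $j$, each inside its own disjoint centralizer extension, lets them be composed without interference into a single solution $\delta_1$ into some larger iterated centralizer extension $H' \supseteq H$ that satisfies conditions 1 and 2 at every index.

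To obtain a solution $\delta$ into $G\ast F$ itself (as required for conditions 1 and 2), I use that $H'$, as an iterated extension of centralizers in the CSA group $G\ast F$, is $(G\ast F)$-discriminated by $G\ast F$ (cf.\ \cite{BMR2}); hence there is a $(G\ast F)$-retract $\rho : H' \to G\ast F$ sending none of the finitely many non-trivial elements $[q_j^{\delta_1}, r_{j+1}^{\delta_1}]$ and $[q_j^{\delta_1}, (r_{j+1}\cdots r_{n+m})^{\delta_1}]$ to $1$, and I set $\delta = \rho \circ \delta_1$. For condition 3, I would instead stay inside an iterated centralizer extension and further conjugate $r_{j+1}^{\delta_1}$ and $x_{j+1}^{\delta_1}$ by fresh elements provided by yet another round of extensions of centralizers; a standard CSA normal-form / ping-pong argument in the resulting extension shows that each of $\langle q_j^{\delta}, r_{j+1}^{\delta}\rangle$, $\langle q_j^{\delta}, x_{j+1}^{\delta}\rangle$ and $\langle q_{j+1}^{\delta}, x_{j+1}^{\delta}\rangle$ is free of rank two.

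The main obstacle is combinatorial bookkeeping: the modifications performed at different indices must not destroy one another. This is resolved by assigning each $j$ its own centralizer extension with its own stable letter $t_{(j)}$, so that the incancelable occurrence of $t_{(j)}$ lives only in the factors arising at index $j$ and is transparent to the commutators computed at any other index $j'$; consequently all the required non-commutativities and free-subgroup properties coexist in the final composite solution.
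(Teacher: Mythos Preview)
Your approach for condition 3 matches the paper's: extend the centralizer of the suffix product $(r_{j+1}\cdots r_{m+n})^{\delta}$ by a fresh $t_j$ and conjugate atoms $j{+}1,\ldots,m{+}n$ by $t_j$, inductively in $j$. For conditions 1 and 2, however, the paper does something quite different, and your version has a gap.

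The paper introduces \emph{no} new stable letters for conditions 1 and 2. Starting from a general-position $\phi$ (Proposition~\ref{prop5}) with $x_j^{\phi}=a$, $y_j^{\phi}=b$, $x_{j+1}^{\phi}=c$, $y_{j+1}^{\phi}=d$ and $\langle c,d\rangle$ free, it replaces $(x_j,y_j,x_{j+1},y_{j+1})$ by $(t^{-s}a,\,b^{t^s},\,c^{t^s},\,t^{-s}d)$ with $t=bc^{-1}$ an \emph{ordinary group element} and $s\in\mathbb N$, and then further substitutes $(d^pc,\,(d^pc)^kd)$ for $(c,d)$. If condition 1 failed for every triple $(s,p,k)$, a short CSA computation forces $[b(d^pc)^{-1},(d^pc)^kd]=1$ for all $p,k$, contradicting freeness of $\langle c,d\rangle$; the same family $\delta_{s,p,k}$ handles condition 2. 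Your discrimination step back to $G\ast F$ is a reasonable way to finish (and arguably makes explicit something the paper glosses over), but it is the construction preceding it that is problematic.

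The gap: ``conjugate the atoms on one side of the cut by $t_{(j)}$'' cannot simultaneously keep $r_1\cdots r_{m+n}=g$ and put an incancelable $t_{(j)}$ into $q_j^{\delta}$. If $t_{(j)}$ commutes with the suffix product and you conjugate the suffix side, then $q_j^{\delta}=q_j^{\delta_0}$ and the full suffix $(r_{j+1}\cdots r_{m+n})^{\delta}$ are both unchanged, so condition 2 (which is equivalent to $[q_j^{\delta},g]\neq 1$) is completely unaffected. If instead $t_{(j)}$ commutes with $q_j^{\delta_0}$ and you conjugate the prefix side, then again $q_j^{\delta}=q_j^{\delta_0}$. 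Either way your assertion that $q_j^{\delta}$ ``carries an occurrence of $t_{(j)}$'' is false. What is actually required is a substitution that alters $r_j$ and $r_{j+1}$ individually while fixing the product $r_jr_{j+1}$---precisely what the paper's parametric formula (and the five-letter construction inside Lemma~\ref{3.4}) accomplishes, and what a bare conjugation of one block of atoms does not.
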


\begin{proof}
Let $S=g$ be an orientable equation. We begin with the first
statement. Let $\phi$ be a solution in general position constructed
in Proposition \ref{prop5}. Let $q_{j-1}=\prod
_{i=1}^{j-1}[x_i,y_i], A=q_{j-1}^{\phi}$, $x_{j}^{\phi}=a,
y_{j}^{\phi}=b,$ $x_{j+1}^{\phi}=c, y_{j+1}^{\phi}=d.$ If
$[A[a,b],[c,d]]\not =1$, then the statement is proved for $j$.
Suppose that $[A[a,b],[c,d]]=1.$ We can assume that $[b,c]\neq 1$
(taking $ab$ instead of $b$ if necessary). Let $t=bc^{-1}$. Take
another solution $\psi$ such that $q_{j-1}^{\psi}=q_{j-1}^{\phi},$
$x_{j}^{\psi}=t^{-s}a, y_{j}^{\psi}=b^{t^s},$
$x_{j+1}^{\psi}=c^{t^s}, y_{j+1}^{\psi}=t^{-s}d$ for a large $s\in
{\mathbb N}.$

If $[q_{j-1}^{\psi}[x_j^{\psi},y_j^{\psi}],
[x_{j+1}^{\psi},y_{j+1}^{\psi}]]=1$, then $$
A[a,b][b,t^s][t^s,c][c,d]=[t^s,c][c,d]A[a,b][b,t^s]$$ and,
therefore,
$$A[a,b][c,d]=[t^s,c]A[a,b][c,d][b,t^s].$$ If we denote
$B=A[a,b][c,d]$, this is equivalent to $B=[t^s,c]B[b,t^s]$ that is
equivalent, by commutation transitivity, to $[t,cBb^{-1}]=1$ or
$[t,B^{c^{-1}}]=1$, or $[B, c^{-1}b]=1$.

We take instead of $c,d$ respectively $(d^p)c, ((d^p)c)^kd$ and
denote the new solution by $\delta _{s,p,k}$. If $[q_j^{\delta
_{s,p,k}}, [x_{j+1}^{\delta _{s,p,k}}, y_{j+1}^{\delta
_{s,p,k}}]]=1$ for all $s,p,k$, then by the CSA property
$[b(d^pc)^{-1}, (d^pc)^kd]=1$ for all $p,k$, this contradicts to the
property that $c,d$ freely generate a free subgroup.

The proof for $j\geq m$ is similar.

The same solution $\delta _{s,p,k}$ can be used to prove the second
statement.

We will now prove the third statement by induction on $j$. Let
$\delta $ be a solution satisfying properties 1 and 2. Let $j=1$ and
$$H_1= \langle G*F,t_1|[t_1,(r_{2}\ldots
r_{m+n})^{\delta}]=1\rangle .$$ We transform $\delta$  into a
solution $\delta_1$ the following way. If $m\neq 0$, then
$$x_1^{\delta _1}=x_1^{\delta}, y_1^{\delta_1}=y_1^{\delta},$$ and
$$x_{i}^{\delta_1}=x_{i}^{\delta {t_1}}, y_{i}^{\delta_1}=y_{i}^{\delta
{t_1}}, z_k^{\delta_1}=z_k^{\delta }{t_1}$$ for $i=2,\ldots ,m,
k=1,\ldots, n.$ The subgroup generated by
$q_1^{\delta_1},r_{2}^{\delta_1}$, is free. Using Proposition
\ref{prop5} one can see that the subgroups generated by
$q_1^{\delta_1},x_{2}^{\delta_1}$ (if $m\geq 2$), and by
$q_2^{\delta_1},x_{2}^{\delta_1}$ are also free. In the case $m=0$
we define $$z_1^{\delta_1}=z_1^{\delta},\ z_k^{\delta_1}=z_k^{\delta
}t_1$$ for $i=2,\ldots ,m, k=1,\ldots, n.$

Suppose by induction that solution $\delta _{i-1}$ into a group
$H_{j-1}$ which is an iterated centralizer extension of $G*F$ and
satisfying the third statement of the proposition for indexes from 1
to $j-1$ has been constructed. Let
$$H_{j}=\langle H_{j-1},t_j|[t_j,(r_{j+1}\ldots
r_{m+n})^{\delta}]=1\rangle .$$

We begin with the  solution $\delta _{j-1}$ and transform it into a
solution $\delta_{j}$ the following way:
$$x_i^{\delta_j}=x_i^{\delta_{j-1}}, y_i^{\delta_j}=y_i^{\delta_{j-1}} ,\ i=1,\ldots ,j;$$ and
$$x_{i}^{\delta_j}=x_{i}^{\delta_{j-1} t_j}, y_{i}^{\delta_{j}}=y_{i}^{\delta_{j-1} t_j}$$
 for $i=j+1,\ldots ,m$,
$$z_i^{\delta _j}=z_i^{\delta_{j-1} }t_j.$$ The subgroups
generated by $q_j^{\delta_j},r_{j+1}^{\delta_j}$, by
$q_j^{\delta_j},x_{j+1}^{\delta_j}$ and by
$q_{j+1}^{\delta_j},x_{j+1}^{\delta_j}$ are free.

The proof for a non-orientable equation is very similar and we skip
it.

\end{proof}

We can now prove Theorem \ref{quadr}. Let $H$ be the fundamental
group of the graph of groups with two vertices, $v$ and $w$ such
that $v$ is a QH vertex, $H_w=\Gamma \in{\mathcal G},$ and there is
a retract from $H$ onto $\Gamma$.  Let $S_Q$ be a punctured surface
corresponding to a QH vertex group in this decomposition of $H$.
Elements $q_j, x_j$ correspond to simple closed curves on the
surface $S_Q$. By Proposition \ref{propmain}, we found a collection
of simple closed curves on $S_{Q}$ and  solution $\delta$ with the
properties 1)-4) from the beginning of Section 4.

Theorem E now follows from Theorem \ref{quadr} by induction.

Notice, that Proposition \ref{propmain} implies also the following
\begin{cor} (Compare to Lemma 1.32 \cite{S1}) Let $Q$ be a fundamental group of a punctured surface
$S_Q$ of Euler characteristic at most -2. Let $\mu :Q\rightarrow
\Gamma$ be a homomorphism that maps $Q$ into a non-abelian subgroup
of $\Gamma$ and the image of every boundary component of $Q$ is
non-trivial. Then either:
\begin{enumerate} \item there exists a separating s.c.c $\gamma\subset
S_Q$ such that $\gamma$ is mapped non-trivially into $\Gamma$, and
the image in $\Gamma$ of the fundamental group of each connected
components obtained by cutting $S_Q$ along $\gamma$ is non-abelian.
\item there exists a non-separating s.c.c. $\gamma\subset S_Q$ such
that $\gamma$ is mapped non-trivially into $\Gamma$, and the image
of the fundamental group of the connected component obtained by
cutting $S_Q$ along $\gamma$ is non-abelian.\end{enumerate}\end{cor}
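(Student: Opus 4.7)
The plan is to present $Q$ as the coordinate group of a standard quadratic equation $S=1$ over $\Gamma$, identifying simple closed curves on $S_Q$ with the elements $q_j$ (separating) and $x_j$ (non-separating) appearing in Proposition \ref{propmain}. The hypothesis $\chi(S_Q)\le -2$ translates to $\kappa(S)\ge 4$, so $S=1$ is regular in the sense of Definition \ref{regular}, and the assumptions on $\mu$ make it into a non-commutative, non-degenerate solution of $S=1$ in $\Gamma$ (non-commutative because $\mu(Q)$ is non-abelian; non-degenerate because each boundary component maps to a non-trivial element).

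I would then argue by contrapositive. Assume that no separating s.c.c.\ on $S_Q$ satisfies (1) and no non-separating s.c.c.\ satisfies (2). Then for every $q_j$, either $\mu(q_j)=1$ or the image of the fundamental group of one of the two subsurfaces cut off by $q_j$ is abelian; similarly, for every $x_j$, either $\mu(x_j)=1$ or the image of the fundamental group of the cut surface is abelian. Since $\Gamma$ is CSA (by Theorem 1.14 of \cite{RHG} and the discussion preceding Proposition \ref{bigpowers}), its maximal abelian subgroups are malnormal. I would chain the above constraints using malnormality, starting from the commutations $[\mu(q_j),\mu(r_{j+1})]=1$ supplied by the failure of (1), to force all of the $\mu(r_i)$'s, $\mu(x_i)$'s and $\mu(z_j)$'s into a single maximal abelian subgroup of $\Gamma$; this would make $\mu(Q)$ abelian, contradicting the hypothesis.

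The overall scheme essentially reverses the inductive conjugation step inside the proof of Proposition \ref{propmain}: where that proof conjugates by suitable stable letters $t_j$ in iterated centraliser extensions to break unwanted commutation, here the hypothetical failures of (1) and (2) supply precisely those commutations, and the CSA property closes the loop. The main obstacle will be the parallel case analysis for positive-genus orientable, genus-zero orientable, and non-orientable surfaces (corresponding to Propositions \ref{prop5}, \ref{prop6}, and \ref{neor} respectively), verifying that the algebraic elements $q_j,x_j$ represent every mapping-class orbit of s.c.c., and checking that the small-equation exceptions excluded in Proposition \ref{prop5} (namely $[x_1,y_1][x_2,y_2]=1$ and $[x,y]c^z=1$) are ruled out here by $\chi(S_Q)\le -2$ together with non-triviality of $\mu$ on boundary components.
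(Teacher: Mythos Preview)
The paper offers no proof beyond the sentence ``Proposition~\ref{propmain} implies also the following'', which is at best a pointer to the surrounding circle of ideas: Proposition~\ref{propmain} constructs a \emph{particular} solution $\delta$ into an extension of $G\ast F$, whereas the corollary concerns an \emph{arbitrary} $\mu:Q\to\Gamma$. Your direct contrapositive via the CSA property is therefore the substance the paper omits, and is the right kind of argument.

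That said, there is a genuine gap in your plan. You propose to draw consequences only from the failure of (1) and (2) on the specific curves $q_j$ and $x_j$, but these do not suffice. On the four-holed sphere ($m=0$, $n=3$) take $\mu(p_1)=a$, $\mu(p_2)=a^{-1}$, $\mu(p_3)=b$, $\mu(p_4)=b^{-1}$ with $[a,b]\neq 1$. Then $\mu(Q)=\langle a,b\rangle$ is non-abelian and every boundary image is non-trivial, yet the only essential curve in your list is $q_2=p_1p_2$, and $\mu(q_2)=1$ while both sides of $q_2$ have cyclic image. Your chaining yields only that $\langle\mu(p_1),\mu(p_2)\rangle$ and $\langle\mu(p_3),\mu(p_4)\rangle$ are each abelian, which does not force them into a common maximal abelian subgroup; the argument stalls. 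The corollary does hold for this $\mu$, but via a curve separating $\{p_1,p_3\}$ from $\{p_2,p_4\}$, which is not among the $q_j$. To close the gap you must either enlarge the family of test curves, or---closer to what the paper presumably intends---first precompose $\mu$ with a modular automorphism of $S_Q$ so as to put the solution in general position (this is what the proof of Proposition~\ref{prop1} in \cite{KMNull} actually does, via Dehn twists), and only then read off the desired curve among the $q_j$. Your ``mapping-class orbit'' remark points at exactly this, but it is the crux of the proof, not a side verification.

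A smaller correction: your claim that $\mu(Q)$ non-abelian forces $\mu$ to be a non-commutative solution fails in positive genus (for $[x,y]c^{z}=g$ take $c^{\mu(z)}=[\mu(x),\mu(y)]\neq 1$). It is correct in genus zero, since there $Q$ is generated by the atoms $p_j$ themselves, and in any case you do not need regularity for the contrapositive once the previous point is repaired.
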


In conclusion, we thank D. Osin who suggested a proof of Proposition \ref{bigpowers} and made
other useful comments.


\begin{thebibliography}{99}

\bibitem{BMR1}
G. Baumslag, A. Myasnikov, V. Remeslennikov, {\it Algebraic geometry
over groups I. Algebraic sets and ideal theory.} Journal of Algebra,
\textbf{219} (1999),  16--79.

\bibitem{BMR2}
G. Baumslag, A. Myasnikov and V. Remeslennikov, {\it Discriminating
completions of hyperbolic groups.} Geometriae Dedicata, \textbf{92}
(2002), 115--143.

\bibitem{CG}
C. Champetier and V. Guirardel. {\it Limit groups as limits of
groups}, Israel J. Math. 146 (2005), 1--75,

\bibitem{DMR} E. Daniyarova, A. Myasnikov, V. Remeslennikov, {\it
Unification Theorems in Algebraic Geometry}, asXiv:0808.2522v1
[mathAG].
\bibitem{Grig} R. Grigorchuk, {\it Degrees of growth of f.g. groups
and the theory of invariant means}, Izv. Akad. Nauk SSSR, Ser. Mat.,
48 (5), p. 939-985, 1984.
\bibitem{Groves1} D. Groves, {\it Limit groups for relatively hyperbolic groups
I: The basic tools.}, Preprint, arXiv:math.GR/0412492.
\bibitem{Groves2} D. Groves, {\it Limit groups for relatively hyperbolic groups
II: Makanin–Razborov diagrams}, Geometry \& Topology,
Volume 9 (2005) p. 2319–2358.


\bibitem{Gromov} M. Gromov, {\it Groupe of polynomial growth and
expanding maps}, IHES Publ. Mat., 53, p. 53-73, 1981.

%\bibitem{KM3}
%O. Kharlampovich and A. Myasnikov, {\it Effective JSJ
%decompositions}, Contemp. Math., Amer. Math. Soc., {\it Algorithms,
%Languages, Logic} (ed. A.Borovik), \textbf{378}, 2005, 87--212.

\bibitem{KM9}
O. Kharlampovich and A. Myasnikov.
\newblock {Description of Fully Residually Free Groups and Irreducible Affine
Varieties Over a Free Group}
\newblock  Center de Recherchers Matematiques, CRM Proceedings and
Lecture Notes, v. 17, 1999, p.71-80.
\bibitem{KMNull}
O. Kharlampovich and A. Myasnikov.
\newblock Irreducible affine varieties over a free group. 1:
irreducibility of
  quadratic equations and Nullstellensatz.
\newblock {\it J. of Algebra}, 200:472--516, 1998.
\bibitem{KMIrc}
O. Kharlampovich and A. Myasnikov.
\newblock Irreducible affine varieties over a
free group. II: Systems in triangular quasi-quadratic form  and
description of residually free groups.
\newblock {\it J. of Algebra,}
v. 200, no. 2,  517--570,
 1998.

\bibitem{Imp}
O. Kharlampovich and A. Myasnikov, {\it Implicit function theorem
over free groups}, Journal of Algebra, vol 290/1, pp. 1--203, 2005.

\bibitem{KMel}
O. Kharlampovich and A. Myasnikov, {\em Elementary theory of free
non-abelian groups},
\newblock  Journal of Algebra ,  302, Issue 2, 451-552,
2006.

\bibitem{Ly2}
R.C. Lyndon, {\it Groups with parametric exponents},
  Trans. Amer. Math. Soc., 96 (1960), 518--533.

\bibitem{MR2}
A. Myasnikov and V. Remeslennikov, {\it Exponential groups II:
extensions of centralizers and tensor completion of CSA-groups.}
Internat. J. Algebra and Comput.,  \textbf{6} no.~6 (1996),
687--711.

\bibitem{MR3}
A. Myasnikov and V. Remeslennikov. {\it Algebraic geometry II:
logical foundations.} J. of Algebra, \textbf{234} (2000), 225--276.

\bibitem{RHG} D. Osin, {\it Relatively hyperbolic groups: intrinsic geometry, algebraic properties and algorithmic problems}, Memours of AMS, v. 179, n. 843 (2006).




\bibitem{Osi}
D. Osin, Peripheral fillings of relatively hyperbolic groups, Invent. Math.,
 167 (2007), no. 2, 295–326.




\bibitem{ESBG}
D. Osin, {\em Elementary subgroups of hyperbolic groups and bounded
generation},Internat. J. Algebra Comput. 16 (2006), no. 1, 99--118.

\bibitem{R}
V. Remeslennikov, {\em E--free groups.} Siberian Math. J., v.30,
6:153--157, 1989.
\bibitem{S1} Z. Sela. {\it Diophantine geometry over groups I: Makanin-Razborov
diagrams.} Publications Mathematiques de l'IHES 93(2001), 31-105.
\end{thebibliography}
\end{document}